\newtheorem{defn}{Definition}
\newtheorem{thm}{Theorem}
\newtheorem{lemma}{Lemma}
\newtheorem{prop}{Proposition}
\journal{Discrete Mathematics}
\begin{document}

\begin{frontmatter}

\title{Loop Homology of Bi-secondary Structures}


\author[math2,bi]{Andrei C. Bura}
\ead{anbur12@vt.edu}
\author[bii]{Qijun He\corref{mycorrespondingauthor}}
\ead{qhe196@gmail.com}
\author[bii,math]{Christian M. Reidys}
\ead{duck@santafe.edu}


\cortext[mycorrespondingauthor]{Corresponding author}

\address[math2]{Department of Mathematics, Virginia Tech, 225 Stanger Street,
Blacksburg, VA 24061-1026}
\address[bi]{Biocomplexity Institute of Virginia Tech, 1015 Life Sciences Circle 
Blacksburg, VA 24061}
\address[bii]{Biocomplexity Institute and Initiative, University of Virginia, 995 Research Park Boulevard, Charlottesville, VA 22911}
\address[math]{Department of Mathematics, University of Virginia, 141 Cabell Dr, Charlottesville, VA 22903}

\begin{abstract}
  In this paper we compute the loop homology of bi-secondary structures.
  Bi-secondary structures were introduced by Haslinger and Stadler and are pairs of RNA secondary
  structures, i.e.~diagrams having non-crossing arcs in the upper half-plane.
  A bi-secondary structure is represented by drawing its respective secondary structures in the
  upper and lower half-plane.
  An RNA secondary structure has a loop decomposition, where a loop corresponds to a boundary
  component, regarding the secondary structure as an orientable fatgraph. The loop-decomposition
  of secondary structures facilitates the computation of its free energy and any two loops
  intersect either trivially or in exactly two vertices. In bi-secondary structures the intersection
  of loops is more complex and is of importance in current algorithmic work in bio-informatics and
  evolutionary optimization.
  We shall construct a simplicial complex capturing the intersections of loops and compute its
  homology. We prove that only the zeroth and second homology groups are nontrivial and furthermore
  show that the second homology group is free. Finally, we provide evidence that the generators of
  the second homology group have a bio-physical interpretation: they correspond to pairs of mutually
  exclusive substructures.
\end{abstract}

\begin{keyword}
RNA, bi-secondary structure, loop, nerve, simplicial homology. 
\end{keyword}

\end{frontmatter}


\section{Introduction}

RNA sequences are single stranded nucleic acids that, in difference to DNA, can form a plethora of structural
conformations. Over the last several decades, researchers have discovered an increasing number of important
roles for RNA \cite{darnell2011rna}. The folded structure of RNA is critically important to its function
\cite{holley1965structure} and has been extensively studied at the coarse grained level of base pairing
interactions. This leads to the notion of RNA secondary structures \cite{thirumalai2001early}, that
represent particular contact matrices and do not take into account the embedding in $3$-space
\cite{fresco1960some}.

The thermodynamic stability of a secondary structure is characterized by its free energy, and is computed
by summing the energy contribution of its loops \cite{gralla1973free,turner2009nndb}. Prediction of the
minimum free energy (i.e.~the most stable) secondary structure for a given sequence, is an important
problem at the most basic biological level \cite{pipas1975method}. 

The first mfe-folding algorithms for RNA secondary structures are due to
\cite{delisi1971prediction, fresco1960some, tinoco1971estimation}. Waterman studied the loop decomposition
and the recursive construction of secondary structures and derived the first dynamic programming (DP)
folding routines for secondary structures \cite{waterman1978secondary}. The DP routine facilitates
polynomial time folding algorithms \cite{nussinov1980fast,waterman1986rapid, zuker1984rna} and partition
function calculation \cite{mccaskill1990equilibrium}. In \cite{haslinger1999rna}, Haslinger and Stadler
extended the notion of secondary structures to bi-secondary structures in order to study pseudoknotted
structures, RNA structures exhibiting cross serial interactions \cite{taufer2008pseudobase++}.
Bi-secondary structures play furthermore a central role for studying sequences that can realize two,
oftentimes mutually exclusive, conformations, in the context of evolutionary transitions
\cite{reidys1997random} and in the study of RNA riboswitches, i.e.~sequences that exhibit two
stable configurations \cite{flamm2001design}. 

The partition function of structures w.r.t.~a fixed sequence has a dual: the partition
function of sequences compatible with a fixed structure \cite{busch2006info}. Partition function
and Boltzmann sampling have a variety of applications in sequence design
\cite{levin2012global, barrett2018efficient}, extracting
structural semantics \cite{barrett2017sequence} and to analyze mutational robustness \cite{he2018genetic}.  

RNA structures, viewed as abstract diagrams or trees, have been studied in enumerative combinatorics
\cite{waterman1978secondary,schmitt1994linear,hofacker1998combinatorics,haslinger1999rna}, algebraic
combinatorics \cite{jin2008combinatorics}, matrix-models \cite{orland2002rna,andersen2013topological}
and topology \cite{bon2008topological,andersen2013enumeration,huang2015shapes}.

In \cite{schmitt1994linear}, a bijection between linear trees and secondary structures was constructed.
This facilitated beautiful, explicit formulae for the number of secondary structures on $n$ vertices,
having exactly $k$ arcs. \\
Jin {\it et al.} \cite{jin2008combinatorics} enumerate $k$-non-crossing RNA structures, based on the
bijection given by Chen {\it et al.} \cite{chen2007crossings}, between $k$-non-crossing partial matchings
and walks in $\mathbb{Z}^{k-1}$ which remain in the interior of the Weyl-chamber $C_0$. The bijection
between oscillating tableaux and matchings originated from Stanley \cite{stanley1986enumerative} and was
generalized by Sundaram \cite{sundaram1990cauchy}.\\
Penner and Waterman connected RNA structures with topology by studying the space of RNA secondary
structures. They proved that the geometrical realizations of the associated complex of secondary structures
is a sphere \cite{penner1993spaces}. In \cite{bon2008topological}, Bon {\it et al.} presented a topological
classification of secondary structures, based on matrix models.\\
In the course of computing the Euler characteristics of the Moduli space of a curve, \cite{harer1986euler},
Harer and Zagier computed the generating function of the number of linear chord diagrams of genus $g$
with $n$ chords. Based on this line of work, Andersen {\it et al.} \cite{andersen2013topological},
enumerated the number of chord diagrams of fixed genus with specified numbers of backbones and chords.
Such an enumeration of chord diagrams provides the number of secondary structures of a given genus as
well as the number of cells in Riemann's moduli spaces for bordered surfaces. This was done by using
Hermitian matrix model techniques and topological recursions along the lines of \cite{ambjorn1993matrix}.  
Employing an augmented version of the topological recursion on unicellular maps of Chapuy
\cite{chapuy2011new}, Huang {\it et al} \cite{huang2015shapes} derived explicit expressions for the
coefficients of the generating polynomial of topological shapes of RNA structures and the generating
function of RNA structures of genus $g$. This lead to uniform sampling algorithms for structures of
fixed topological genus as well as a natural way to resolve crossings in pseudoknotted
structures \cite{huang2016topological}.

Bi-secondary structures emerge naturally in the context of evolutionary transitions, since they are closely
connected to sets of sequences, that are \emph{simultaneously} compatible with two structures
\cite{reidys1995neutral}. 
This paper is motivated by the dynamical programming (DP) routine of Huang \cite{Huang}, that is based on
sub-problems associated with sets of loops. The sub-problems were constructed incrementally by adding
one loop at a time, where subsequently added nucleotides affect the energy calculation if they appear
in multiple loops. This naturally leads one to consider intersections of loops and eventually to
introduce the \emph{nerve} of loops as a simplicial complex.

In this paper, we study the homology of bi-secondary structures \cite{haslinger1999rna}. We show that
for any bi-secondary structure $R$, we have only two nontrivial homology groups, $H_0(R)$ and $H_2(R)$.
The key to establish $H_1(R)=0$ is to establish in Lemma~\ref{L:delta} the existence of certain, spanning,
sub $1$-skeleta, whose existence follows from an inductive argument over the arcs of one of the secondary
structures. These skeleta give rise to specific trees, which in turn allow one to systematically process
elements of $\text{\rm Ker}(\partial_1)$.  We show that $H_0(R)\cong \mathbb{Z}$ and $H_2(R) \cong
\bigoplus_{k=1}^r \mathbb{Z}$, introducing the rank of $H_2(R)$ as a new invariant of the bi-secondary
structures. We show that $H_2(R)$ is free by showing that it is a subgroup of a free group, whose
freeness in turn is a consequence of Lemma~\ref{exposed} which guarantees the existence of exposed faces
of $3$-simplices. We then discuss the new invariant, observing that all RNA riboswitch sequences in
data-bases exhibit $\text{\rm rank}(H_2(R))=1$, seldomly assumed by random secondary structure pairs
and provide an outlook on future work.

\section{Some basic facts}

We shall begin by defining loops in an RNA secondary structure and then present results on its loop decomposition.

  An RNA diagram $S$ over $[n]$, is a vertex-labeled graph whose vertices are drawn on the horizontal axis
  and labeled by $[n]=\{1,\ldots,n\}$. An \emph{arc} $(i,j)$, is an ordered pair of vertices, which represents
  the base pairing between the $i$-th and $j$-th nucleotides in the RNA structure. Furthermore, each vertex can
  be paired with at most one other vertex, and the arc that connects them is drawn in the upper half-plane. We
  introduce two ``formal'' vertices associated with positions $0$ and $n + 1$, respectively, closing any diagram
  by the arc $(0, n + 1)$, called the rainbow. The set $[0,n+1]$ is called the diagram's \emph{backbone}, see
  Figure~\ref{fig1}.

  Let $S$ be an RNA diagram over $[n]$. Two arcs $(i, j)$ and $(p, q)$ are called \emph{crossing} if and only if
  $i < p < j < q$. $S$ is called a \emph{secondary structure} if it does not contain any crossing arcs. The arcs
  of $S$ can be endowed with a partial order as follows: $(k,l)\prec_S (i,j)\iff i < k < l< j$. We denote this by
  $(S,\prec_S)$ and call it the arc poset of $S$. Finally, an \emph{interval} $[i,j]$ on the backbone is the set
  of vertices $\{i,i+1,\ldots,j-1,j\}$.

  Let $S$ be a secondary structure over $[n]$. A \emph{loop} $s$ in $S$ is a subset of vertices, represented as
  a disjoint union of a sequence of contiguous blocks on the backbone of $S$, $s=\dot\bigcup_{i=1}^k [a_i,b_i]$,
  such that $(a_1,b_k)$ and $(b_i,a_{i+1})$, for $1\leq i\leq k-1$, are arcs and such that any other
  interval-vertices are unpaired. Let $\alpha_s$ denote the unique, maximal arc $(a_1,b_k)$ of
  the loop.

In this paper we shall identify a secondary structure with its set of loops. 
Let $S$ be a secondary structure over $[n]$ and $s=\dot\bigcup_{i=1}^k [a_i,b_i]$ a loop in $S$, then\\
  $(1)$ each unpaired vertex is contained in exactly one loop,\\
  $(2)$ $(a_1,b_k)$ is maximal w.r.t.~$\prec_S$ among all arcs contained in $s$, i.e.~there is a bijection between
        arcs and loops, mapping each loop to its maximal arc,\\
 $(3)$ the Hasse diagram of the $S$ arc-poset is a rooted tree $\text{Tr}(S)$, having the rainbow arc
        as root,\\
  $(4)$ each non-rainbow arc appears in exactly two loops. 

\begin{figure}[htbp]
    \centering
    \includegraphics[width=0.5\textwidth]{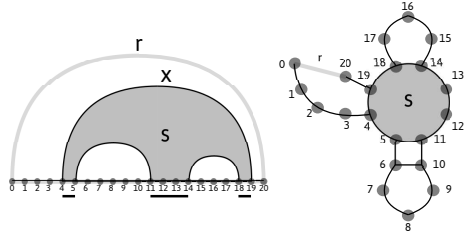}
    \caption{
      LHS: a secondary structure, $S$, and a distinguished loop $s=[4,5]\cup[11,14]\cup[18,19]$.
      $r$ is the rainbow arc and $\alpha_s=x$.
      RHS: $S$ represented as a planar $RNA$ molecule.}
    \label{fig1}
\end{figure}

\begin{prop}\label{3loops}
  Let $s$, $s'$ and $s''$ be three distinct loops in a secondary structure $S$. Then\\
  (1) $s\cap s' \cap s''=\varnothing$,
  (2) $s\cap s'\neq \varnothing$ implies $|s\cap s'|=2$.
\end{prop}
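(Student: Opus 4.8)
The plan is to reduce both parts to a single structural observation about how many loops can contain a given vertex, after which the intersection pattern of loops is read off from the arc-tree $\text{Tr}(S)$ supplied by fact (3). First I would pin down \emph{which} loops contain a prescribed vertex $v$. If $v$ is unpaired, fact (1) says it lies in exactly one loop, so unpaired vertices never contribute to an intersection. If $v$ is paired, it is an endpoint of a \emph{unique} arc $a$. Inspecting the interval description $s=\dot\bigcup_{i=1}^k[a_i,b_i]$ of a loop, the only paired vertices of $s$ are the boundary endpoints $a_i,b_i$, since any strictly-interior interval vertex is unpaired; thus every paired vertex of $s$ is an endpoint of the maximal arc $\alpha_s$ or of a child arc. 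Hence $v$ belongs to $s$ if and only if its arc $a$ is a boundary arc of $s$. By fact (4) a non-rainbow arc is a boundary arc of exactly two loops (the one in which it is maximal and the one in which it appears as a child), while the rainbow bounds only one. Consequently every vertex of $S$ lies in \emph{at most two} loops.

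Part (1) is then immediate: a vertex of $s\cap s'\cap s''$ with $s,s',s''$ pairwise distinct would lie in at least three loops, contradicting the bound just established, so $s\cap s'\cap s''=\varnothing$.

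For part (2), suppose $s\cap s'\neq\varnothing$ and pick $v$ in the intersection. By the first step $v$ is paired, and its unique arc $a$ must be a boundary arc of both $s$ and $s'$. Identifying each loop with its maximal arc via fact (2) and passing to $\text{Tr}(S)$, the two loops containing $a$ are the loop $L(a)$ in which $a$ is maximal and the loop $L(\mathrm{parent}(a))$ in which $a$ occurs as a child; hence $\{s,s'\}=\{L(a),L(\mathrm{parent}(a))\}$. Both endpoints of $a$ lie in both of these loops, giving $|s\cap s'|\ge 2$, and it remains to establish the reverse inequality.

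The main obstacle is the upper bound $|s\cap s'|\le 2$, equivalently that two distinct loops cannot share two different arcs. I expect to argue by contradiction: a second shared arc $b\neq a$ would, by the same reasoning, force $\{s,s'\}=\{L(b),L(\mathrm{parent}(b))\}$, whence $\{a,\mathrm{parent}(a)\}=\{b,\mathrm{parent}(b)\}$ as sets of maximal arcs with $a\neq b$. This forces $a=\mathrm{parent}(b)$ and $b=\mathrm{parent}(a)$, i.e.~a $2$-cycle in $\text{Tr}(S)$, contradicting that $\text{Tr}(S)$ is a tree (each arc has a unique parent). Therefore $s$ and $s'$ share exactly the single arc $a$; every vertex of $s\cap s'$ is an endpoint of $a$, and so $|s\cap s'|=2$.
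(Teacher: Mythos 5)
Your proof is correct and takes essentially the same approach as the paper's: the paired/unpaired vertex dichotomy (unpaired vertices in exactly one loop, paired vertices in at most two) yields part (1), and identifying $s\cap s'$ with the endpoints of a single shared arc yields part (2). Your tree-based $2$-cycle argument showing that two distinct loops cannot share two distinct arcs actually supplies a justification that the paper's terse proof merely asserts, so it is a welcome elaboration rather than a different route.
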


\begin{proof}
  Vertices of $S$ are either paired or unpaired. In the latter case, they are contained in
  exactly one loop. In the former, by construction, they are endpoints of arcs and contained in exactly two
  distinct loops. Hence, no vertex can be contained in three distinct loops. In case of $s\cap s'\neq \varnothing$
  the loops intersect in the endpoints of exactly one arc, which is maximal for exactly of of them, whence
  $|s\cap s'|=2$.
\end{proof}


In this section we introduce bi-secondary structures and their nerves. To this end we introduce the nerve
over a finite collection of sets:

  Let $X=\{x_0,x_1,\ldots,x_m\}$ be a collection of finite sets. We call
  $Y=\{x_{i_0},\ldots,x_{i_d}\}\subseteq X$ a \emph{$d$-simplex} of $X$ iff
  $\bigcap_{k=0}^d x_{i_k}\neq \varnothing$. We set $\Omega(Y)=\bigcap_{k=0}^d x_{i_k}$ and
  refer to $\omega(Y)=|\Omega(Y)|\neq 0$ as the \emph{weight} of $Y$.
  Let $K_d(X)$ be the set of all $d$-simplices of $X$, then the \emph{nerve} of $X$ is
  $$
  K(X)=\dot\bigcup_{d=0}^{\infty} K_d(X)\subseteq 2^{X}.
  $$
  A $d'$-simplex $Y'\in K(X)$ is called a $d'$-face of $Y$ if $d'<d$ and $Y'\subseteq Y$. By construction,
  $K(X)$ is an abstract simplicial complex.
Let $S$ be a secondary structure over $[n]$. The geometric realization of $K(S)$, the nerve over the set
of loops of $S$, is a tree. By means of the correspondence between arcs and loops, this tree of loops is
isomorphic to $\text{Tr}(S)$. 
\begin{defn}
  Given two secondary structures $S$ and $T$ over $[n]$, we refer to the pair $R=(S,T)$ as a bi-secondary
  structure. Let $S\cup T$ be the loop set of $R$ and $K(R)=\dot\bigcup_{d=0}^{\infty} K_d(R)$ its nerve
  of loops.
\end{defn}
We represent the diagram of a bi-secondary structure $R=(S,T)$ with the arcs of $S$ in the upper half plane while the
arcs of $T$ reside in the lower half plane.
  Let $R=(S,T)$ be a bi-secondary structure with loop nerve $K(R)$. A $1$-simplex
  $Y=\{r_{i_0},r_{i_1}\}\in K_1(R)$ is called \emph{pure} if $r_{i_0}$ and $r_{i_1}$
  are loops in the same secondary structure and \emph{mixed}, otherwise.


Suppose $Y$ is a pure $1$-simplex in $K(R)$, then by Proposition~\ref{3loops} we have $\omega(Y)=2$, see
Figure~\ref{fig2}.

\begin{figure}[htbp]
    \centering
    \includegraphics[width=0.5\textwidth]{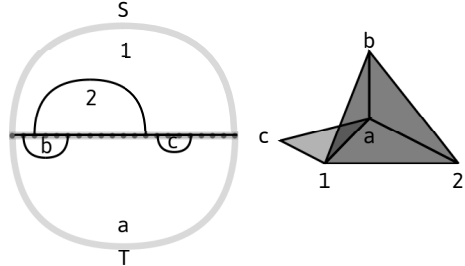}
    \caption{LHS: a bi-secondary structure $R=(S,T)$.
      RHS: the geometric realization of its loop nerve, $K(R)$. the $1$-simplices $\{c,1\}$ and
      $\{1,2\}$ are mixed and pure, respectively.}
    \label{fig2}
\end{figure}

\begin{lemma}\label{K2}
  Let $R=(S,T)$ be a bi-secondary structure with nerve $K(R)$. For any $Y\in K_2(R)$, exactly one of
  its three $1$-faces is pure, the other two being mixed.
  Furthermore, we have $\omega(Y)\le 2$.
\end{lemma}
\begin{proof}
  Let $Y=\{r'_0,r'_1,r'_2\}\in K_2(R)$ be a  $2$-simplex of $K(R)$. By Proposition~\ref{3loops},
  $\cap_{i=0,1,2} r'_i\neq \varnothing$ implies that not all three loops can be from the same structure.
  W.l.o.g.~suppose $r'_0,r'_1\in S$ and $r'_2\in T$. Certainly $Z=\{r'_0,r'_1\}$ is a pure $1$-face of $Y$
  and two other $1$-faces of $Y$ are by construction mixed since they contain $r'_3\in T$.
  For any $1$-face $Z'$ of $Y$, we have $\omega (Y)\leq \omega(Z')$ and Proposition~\ref{3loops}
  guarantees $\omega(Z)=2$, whence the lemma.
\end{proof}
\begin{lemma}\label{K3}
  Let $R=(S,T)$ be a bi-secondary structure with nerve $K(R)$ and let
  $Y=\{r_0,r_1,r_2,r_3\}\in K_3(R)$ be a $3$-simplex. Then we have\\
  (a) $Y=\{s_0,s_1,t_0,t_1\}$, where $s_0,s_1\in S$ and $t_0,t_1\in T$, \\
  (b) $Y$ has exactly two pure $1$-faces, $\{s_0,s_1\}$ and $\{t_0,t_1\}$, \\
  (c) $\omega(Y)\le 2$.
\end{lemma}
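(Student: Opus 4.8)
The plan is to mirror the argument for Lemma~\ref{K2}, extracting everything from Proposition~\ref{3loops} together with the downward-closure of the nerve. Since $Y\in K_3(R)$, by definition $\Omega(Y)=\bigcap_{i=0}^{3} r_i\neq\varnothing$; I would fix a vertex $v\in\Omega(Y)$, so that $v$ lies in all four loops simultaneously. Because any subset $Y'\subseteq Y$ satisfies $\Omega(Y')\supseteq\Omega(Y)\neq\varnothing$, every such $Y'$ is again a simplex of $K(R)$; in particular all $\binom{4}{2}=6$ pairs are genuine $1$-faces, which is what legitimises speaking of pure and mixed faces at all and permits applying Proposition~\ref{3loops} to the sub-collections.

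For part (a) I would show the four loops split evenly between the two structures. Suppose, for contradiction, that three of them, say $r_0,r_1,r_2$, belonged to the same structure. These would be three distinct loops of a single secondary structure, so Proposition~\ref{3loops}(1) forces $r_0\cap r_1\cap r_2=\varnothing$, whence $\Omega(Y)\subseteq r_0\cap r_1\cap r_2=\varnothing$, contradicting $v\in\Omega(Y)$. Thus neither structure can contribute three or more loops, and since there are four loops in total the only possibility is two from $S$ and two from $T$; relabelling gives $Y=\{s_0,s_1,t_0,t_1\}$ with $s_0,s_1\in S$ and $t_0,t_1\in T$.

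Part (b) is then immediate bookkeeping: a $1$-face is pure exactly when its two loops come from the same structure, and under the partition of (a) the only same-structure pairs are $\{s_0,s_1\}$ and $\{t_0,t_1\}$. The remaining four pairs each combine one loop of $S$ with one loop of $T$ and are therefore mixed, giving exactly two pure $1$-faces.

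Finally, for part (c) I would invoke the weight-monotonicity already used in Lemma~\ref{K2}: since $\{s_0,s_1\}$ is a $1$-face of $Y$, we have $\Omega(Y)\subseteq\Omega(\{s_0,s_1\})$ and hence $\omega(Y)\le\omega(\{s_0,s_1\})$. As $s_0$ and $s_1$ are distinct loops of $S$ with $s_0\cap s_1\supseteq\Omega(Y)\neq\varnothing$, Proposition~\ref{3loops}(2) yields $\omega(\{s_0,s_1\})=|s_0\cap s_1|=2$, so $\omega(Y)\le 2$. I do not expect a genuine obstacle here; the only point requiring care is the opening observation that faces of a simplex are simplices, and the crux of the whole statement is really the clean even split established in (a).
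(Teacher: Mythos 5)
Your proof is correct and takes essentially the same approach as the paper: all three parts rest on Proposition~\ref{3loops}, and your arguments for (b) and (c) coincide with the paper's (the paper likewise deduces (c) from $\bigcap_{i=0}^3 r_i\subseteq s_0\cap s_1$ and $|s_0\cap s_1|=2$). The only difference is cosmetic, in (a): the paper pins down the structure membership by inspecting two of the $2$-faces of $Y$ (using the fact that every $2$-simplex has the form $\{s,s',t\}$ or $\{t,t',s\}$), whereas you argue directly by contradiction that no three of the four loops can lie in the same structure --- an equivalent, if anything slightly more self-contained, routing of the same pigeonhole idea.
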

\begin{proof}
  Any $2$-simplex in the loop nerve is of the form $\{s,t,t'\}$ or $\{t,s,s'\}$ and $Y$ has
  the $2$-faces $\{r_0,r_1,r_2\}$, $\{r_1,r_2,r_3\}$, $\{r_0,r_2,r_3\}$ and $\{r_0,r_1,r_3\}$.
  In view of the $2$-simplex $\{r_0,r_1,r_2\}$, we can, w.l.o.g.~set $s_0=r_0$ $s_1=r_1$ and
  $t_0=r_2$. The $2$-simplex $\{r_0,r_1,r_3\}$ then implies that $r_3$ is a $T$-loop, whence
  we can set $t_1=r_3$ and $(a)$ follows. Assertion $(b)$ follows immediately from $(a)$.
  Finally, $(c)$, follows from $\cap_{i=0}^3 r_i\subset s_0\cap s_1$ and $\vert s_0\cap s_1\vert=2$.
\end{proof}

\begin{lemma}\label{2K3}
  Let $R=(S,T)$ be a bi-secondary structure and let $K(R)$ be its loop nerve. Then any pure
  $1$-simplex, $P$, appears as the $1$-face of at most two distinct $3$-simplices.
\end{lemma}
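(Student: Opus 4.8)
The plan is to fix the pure $1$-simplex $P$ and parametrize all $3$-simplices that contain it, then bound the number of admissible parameters. By the symmetry between $S$ and $T$ I would assume without loss of generality that $P=\{s_0,s_1\}$ is pure in $S$. Proposition~\ref{3loops} then gives $\omega(P)=2$, so $\Omega(P)=s_0\cap s_1=\{a,b\}$ for two distinct backbone vertices $a,b$.

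Next I would invoke Lemma~\ref{K3} to describe every $3$-simplex $Y$ having $P$ as a $1$-face. Such a $Y$ has exactly two pure $1$-faces, one from $S$ and one from $T$; since $P\subseteq S$ is pure, $P$ must be the $S$-pure face, which forces $Y=\{s_0,s_1,t_0,t_1\}$ with distinct $t_0,t_1\in T$. Thus counting the $3$-simplices that contain $P$ reduces to counting the admissible pairs $\{t_0,t_1\}$ of distinct $T$-loops, where admissibility means $\Omega(Y)=s_0\cap s_1\cap t_0\cap t_1\neq\varnothing$. Since $s_0\cap s_1=\{a,b\}$, this condition is exactly that $a\in t_0\cap t_1$ or $b\in t_0\cap t_1$.

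The crux is then a purely local count at each of the two vertices. Fix $v\in\{a,b\}$. By the loop facts recalled in the proof of Proposition~\ref{3loops}, a vertex of $T$ is either unpaired, and hence lies in exactly one $T$-loop, or paired, and hence lies in exactly two $T$-loops. Consequently there are at most two $T$-loops containing $v$, so at most one pair $\{t_0,t_1\}$ of distinct $T$-loops can satisfy $v\in t_0\cap t_1$; such a pair exists precisely when $v$ is paired in $T$, and it is then uniquely determined as the two loops incident to $v$. Summing the (possibly coinciding, possibly empty) contributions from $a$ and from $b$ yields at most $2$ admissible pairs, hence at most two distinct $3$-simplices containing $P$.

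I expect the remaining points to be routine bookkeeping rather than a genuine obstacle. One must check that each admissible pair really does produce a $3$-simplex, which is immediate since $v\in s_0\cap s_1\cap t_0\cap t_1$ witnesses $\Omega(Y)\neq\varnothing$, and that no overcounting occurs when $t_0\cap t_1$ happens to contain both $a$ and $b$; the latter only lowers the total, so the bound $2$ is safe. The symmetric case $P\subseteq T$ is handled verbatim by interchanging the roles of $S$ and $T$.
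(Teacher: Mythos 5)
Your proof is correct and follows essentially the same route as the paper: reduce via Lemma~\ref{K3} to counting pairs $\{t_0,t_1\}$ of $T$-loops whose intersection meets $\Omega(P)=\{a,b\}$, then use the fact that each backbone vertex lies in at most two loops of a single secondary structure to get at most one admissible pair per vertex. If anything, your vertex-by-vertex count makes explicit the uniqueness step that the paper's proof leaves implicit, so no changes are needed.
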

\begin{proof}
  W.l.o.g.~we may assume $P=\{s_0,s_1\}$, for some $s_0,s_1\in S$. If $P$ is a $1$-face of a $3$-simplex
  $Y$ then by Lemma~\ref{K3}, $Y=\{s_0,s_1,t_0,t_1\}$ for some $t_0,t_1\in T$. For any such $3$-simplex
  we have $\Omega(Y)\subset s_0 \cap s_1$, where $s_0\cap s_1=\{x,y\}$. Similarly $t_0\cap t_1=\{a,b\}$ and
  $\Omega(Y)\subset \{a,b\}$. In the case of $\{a,b\}=\{x,y\}$, $\{s_0,s_1\}$ is contained exclusively in the
  $3$-simplex $\{s_0,s_1,t_0,t_1\}$.
  Otherwise, we obtain two $3$-simplices $Y_x=\{s_0,s_1,t_0^x,t_1^x\}$ and $Y_y=\{s_0,s_1,t_0^y,t_1^y\}$ and in
  view of $\{x,y\}\cap t_0^x \cap t_1^x=\{x\}$ and $\{x,y\}\cap t_0^y \cap t_1^y=\{y\}$, both, $Y_x$ and $Y_y$
  contain $P$, see Figure~\ref{fig3}.
\end{proof}
\begin{figure}[htbp]
    \centering
    \includegraphics[width=0.5\textwidth]{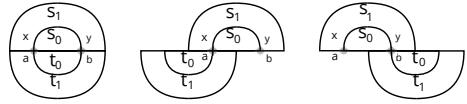}
    \caption{LHS: the case $\{a,b\}=\{x,y\}$. Center: the case of $Y_x=\{s_0,s_1,t_0^x,t_1^x\}$.
      RHS: the case of $Y_y=\{s_0,s_1,t_0^y,t_1^y\}$.}
    \label{fig3}
\end{figure}

\begin{defn}
  Let $K(X)=\dot\bigcup_{d=0}^\infty K_d(X)$ be an abstract simplicial complex and let $Y\in K_d(X)$ be a
  $d$-simplex. Let $Y'$ be a $(d-1)$-face of $Y$. We say $Y'$ is \emph{$Y$-exposed} if and only if no other
  $d$-simplices of $K$ contain $Y'$ as a $(d-1)$-face.
\end{defn}
\begin{lemma}\label{exposed}
  Let $R=(S,T)$ be a bi-secondary structure with loop-nerve $K(R)$. Then any $Y\in K_3(R)$ contains at least two
  $Y$-exposed $2$-faces.
\end{lemma}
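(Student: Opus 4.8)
The plan is to exploit the rigid description of a $3$-simplex from Lemma~\ref{K3} together with the bound on $3$-simplices sharing a pure edge from Lemma~\ref{2K3}. First I would fix $Y=\{s_0,s_1,t_0,t_1\}$ with $s_0,s_1\in S$ and $t_0,t_1\in T$, as guaranteed by Lemma~\ref{K3}, and list its four $2$-faces. The key structural observation is that they fall into two pairs according to which pure $1$-face they carry: $F_1=\{s_0,s_1,t_0\}$ and $F_2=\{s_0,s_1,t_1\}$ both contain the pure edge $\{s_0,s_1\}$, while $F_3=\{s_0,t_0,t_1\}$ and $F_4=\{s_1,t_0,t_1\}$ both contain $\{t_0,t_1\}$.

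The crux is to translate ``not $Y$-exposed'' into a statement about $3$-simplices sharing a pure edge. Any $3$-simplex $Z$ containing $F_1$ has the form $Z=F_1\cup\{r\}$; since $Z$ has exactly two pure $1$-faces by Lemma~\ref{K3} and $F_1$ already carries the pure edge $\{s_0,s_1\}$, every such $Z$ contains $\{s_0,s_1\}$ as a $1$-face, and the same holds for $F_2$. So I would record: $F_1$ fails to be $Y$-exposed if and only if some $3$-simplex distinct from $Y$ contains $\{s_0,s_1\}$ together with $t_0$, and symmetrically for $F_2$ with $t_1$.

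Now comes the pigeonhole step. By Lemma~\ref{2K3} the pure edge $\{s_0,s_1\}$ lies in at most two $3$-simplices; one of them is $Y$, so there is at most one other, say $Y'$. If both $F_1$ and $F_2$ were non-exposed, the witnessing $3$-simplices would each have to equal $Y'$, forcing $Y'\supseteq F_1\cup F_2=\{s_0,s_1,t_0,t_1\}=Y$; as both are $3$-simplices this yields $Y'=Y$, contradicting $Y'\neq Y$. Hence at least one of $F_1,F_2$ is $Y$-exposed. Interchanging the roles of $S$ and $T$ and repeating the argument verbatim with the pure edge $\{t_0,t_1\}$ shows at least one of $F_3,F_4$ is $Y$-exposed, giving two $Y$-exposed $2$-faces in total.

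The main obstacle I anticipate is not any computation but the conceptual reduction in the second step: recognizing that the non-exposedness of a $2$-face is governed entirely by the pure $1$-face it carries, so that Lemma~\ref{2K3} can be applied. Once the four faces are correctly grouped by their pure edge, the counting argument is immediate and requires no weight or vertex-level analysis.
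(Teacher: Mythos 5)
Your proof is correct and follows essentially the same route as the paper's: both invoke Lemma~\ref{K3} to write $Y=\{s_0,s_1,t_0,t_1\}$, group the four $2$-faces by which pure $1$-face they contain, and then apply Lemma~\ref{2K3} to the pure edge to conclude (by the same pigeonhole/uniqueness argument, since $Y$ is the only $3$-simplex containing both faces of a pair) that at least one face in each pair is $Y$-exposed.
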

\begin{proof}
  By Lemma~\ref{K3}, any $3$-simplex, $Y$, is of the form $Y=\{s_0,s_1,t_0,t_1\}$ and has exactly two pure
  $1$-faces, $P_1=\{s_0,s_1\}$ and $P_2=\{t_0,t_1\}$. We shall use $P_1$ to construct at least one specific,
  exposed $2$-face of $Y$.
  For $P_1$, $W_1=\{s_0,s_1,t_0\}$ and $W_2=\{s_0,s_1,t_1\}$ are the only two distinct $2$-faces, that contain $P_1$ as
  a pure $1$-face.
  In $K(R)$, $Y$ is the unique $3$-simplex that contains both $W_1$ and $W_2$ as $2$-faces.
  It thus remains to show that there cannot exist two distinct $3$-simplices $Y_1$ and $Y_2$ having $W_1$ and $W_2$ as
  a $2$-face, respectively. If this were the case, $Y,Y_1,Y_2$ were, by construction, three distinct $3$-simplices
  having $P_1$ as a pure $1$-face, which, in view of Lemma~\ref{2K3}, is impossible.
  Thus, either $W_1$ or $W_2$ is exposed in $Y$.
  We can argue analogously for $P_2$ and the lemma follows.
\end{proof}

\section{Homology}

In this section we consider the chain complex over the loop nerve $K(R)$ and compute its homology. We
will show that only the second homology group $H_2(R)$ is nontrivial and that $H_2(R)$ is free.
This produces a new invariant for bi-secondary structures, that provides insight into RNA riboswitch
sequences, i.e.~where a single sequence switches, depending on context, between mutually exclusive
structures.

Suppose we are given a bi-secondary structure $R=(S,T)$ and let $(T,\prec_T)$ and $(S,\prec_S)$ be
the posets of arcs on the secondary structures $T$ and $S$ respectively. $\prec_S$
and $\prec_T$ allow us to endow $R$ with the poset-structure:
$$
(R,\prec_R)=(T,\prec_T)\oplus (S,\prec_S),
$$
where $R=T\cup S$ and $\prec_R$ is given by
$$
r_1\prec_R r_2\Leftrightarrow
\begin{cases}
  r_1,r_2 \in T\; \text{\rm and } r_1\prec_T r_2 \\
  r_1,r_2 \in S\;  \text{\rm and } r_1\prec_S r_2 \\
  r_1\in S,\, r_2\in T
  \end{cases}
$$
Let us next choose a linear extension of $(R,\prec_R)$, $(R, \le)$, to which we refer to as the simplicial
order of the loop nerve. Any $d$-simplex, $Y\in K_d(R)$ becomes then the unique $d$-tuple
$Y=(r_0,r_1,\dots,r_d)$ where $r_0\le r_1\le \dots \le r_d$.

Let $R=(S,T)$ be a bi-secondary structure with loop nerve $K(R)$. Let $C_d(R)$ be the simplicial chain group
of dimension $d$ of $K(R)$. Let $Y =(r_0,r_1,\dots ,r_d)\in C_d(R)$ and $\partial_d:C_d(R)\longrightarrow
C_{d-1}(R)$ be the boundary map given by 
$$
  \partial _{d}(Y )=\sum _{i=0}^{d}(-1)^{i}\left(r_0,\dots ,r_{i-1},r_{i+1},\dots ,r_d\right).
$$
Let furthermore $H_d(R)=\text{\rm Ker}(\partial_{d})/ \text{\rm Im}(\partial_{d+1})$ be the $d$'th homology group of
the loop nerve of $R$. In the following we shall show

\begin{thm}\label{T:main}
The loop-nerve of a bi-secondary structure, $R$, has only the following nontrivial homology groups
\begin{eqnarray*}
  H_0(R) & = & \mathbb{Z} \\
  H_2(R) & = & \bigoplus_{k=1}^r\mathbb{Z}.
\end{eqnarray*}
\end{thm}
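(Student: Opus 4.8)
The plan is to read the homology directly off the very rigid shape of the chain complex. First I would bound the dimension of $K(R)$: by Proposition~\ref{3loops} no three loops of a single structure meet, so every simplex contains at most two $S$-loops and at most two $T$-loops, whence $\dim K(R)\le 3$ and the complex is $0\to C_3\xrightarrow{\partial_3}C_2\xrightarrow{\partial_2}C_1\xrightarrow{\partial_1}C_0\to 0$, leaving nothing above $H_3$. For $H_0$ I would check connectivity: the $S$-loops span the tree $\mathrm{Tr}(S)$ and the $T$-loops span $\mathrm{Tr}(T)$, and since every backbone vertex lies in a loop of each structure, any vertex yields a mixed $1$-simplex joining the two trees; thus $H_0(R)\cong\mathbb{Z}$.

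For $H_3=0$ I would show that $\partial_3$ is injective using Lemma~\ref{exposed}. If $z=\sum_Y a_Y Y$ were a nonzero $3$-cycle, I would pick a $Y$ with $a_Y\neq 0$ and one of its exposed $2$-faces $W$; since $W$ lies in no other $3$-simplex, the coefficient of $W$ in $\partial_3 z$ equals $\pm a_Y\neq 0$, contradicting $\partial_3 z=0$. Hence $\ker\partial_3=0=H_3$.

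The freeness of $H_2$ I would extract from the same exposed faces. Lemma~\ref{exposed} lets me select, for each $3$-simplex $Y$, an exposed $2$-face $e(Y)$; as an exposed face lies in a unique $3$-simplex, $e$ is injective and the row of the matrix of $\partial_3$ indexed by $e(Y)$ carries $\pm 1$ in column $Y$ and $0$ elsewhere. This square submatrix is therefore $\pm1$-diagonal, hence invertible over $\mathbb{Z}$, so the corresponding basis elements can be eliminated and $C_2/\mathrm{im}\,\partial_3$ is identified with the free abelian group on the remaining $2$-simplices $K_2\setminus e(K_3)$. Because $\mathrm{im}\,\partial_3\subseteq\ker\partial_2$, the group $H_2=\ker\partial_2/\mathrm{im}\,\partial_3$ embeds as a subgroup of the free abelian group $C_2/\mathrm{im}\,\partial_3$; a subgroup of a free abelian group is free, and it is finitely generated, so $H_2(R)\cong\bigoplus_{k=1}^r\mathbb{Z}$.

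The real obstacle is $H_1=0$, i.e.~$\ker\partial_1=\mathrm{im}\,\partial_2$. Here I would fix a spanning tree of the $1$-skeleton assembled from all the pure edges — all of $\mathrm{Tr}(S)$ and $\mathrm{Tr}(T)$ — together with a single connecting mixed edge, so that $\ker\partial_1$ is free on the fundamental cycles of the remaining, necessarily mixed, edges, and it suffices to show each such cycle bounds. This is exactly where I expect the difficulty: a mixed edge $\{s,t\}$ can close up through long tree paths, and one must produce a filling $2$-chain built from the triangles $\{s,s',t\}$ and $\{s,t,t'\}$. The clean way to organize the bookkeeping is the inductive construction over the arcs of one structure that yields spanning sub-$1$-skeleta (Lemma~\ref{L:delta}); these provide preferred trees along which every element of $\ker(\partial_1)$ can be pushed until each independent mixed-edge cycle is matched with the boundary of a single $2$-simplex. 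Carrying out this reduction, and verifying that the induction step of adding one arc preserves the required spanning property, is the technical heart of the argument, after which $H_1(R)=0$ follows and the theorem is complete.
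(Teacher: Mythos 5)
Your treatment of every dimension except one is sound and close to the paper's own. The dimension bound via Proposition~\ref{3loops} is exactly Lemma~\ref{d>3}; the connectivity argument for $H_0$ is the paper's Lemma~\ref{L:0} up to the choice of connecting mixed edge (the paper uses the two rainbows sharing $0$ and $n+1$); and the injectivity of $\partial_3$ via exposed $2$-faces is precisely Theorem~\ref{H3}. Your argument for the freeness of $H_2$ is in fact a cleaner route than the paper's: where the paper (Theorem~\ref{T:H2}, Claim~2) rules out torsion in $C_2(R)/\mathrm{Im}(\partial_3)$ by a coefficient computation, you observe that the chosen exposed faces $e(Y)$ are pairwise distinct and occur in no boundary other than that of their own $3$-simplex, so that $\{\partial_3(Y_i)\}_i\cup\bigl(K_2\setminus e(K_3)\bigr)$ arises from the standard basis of $C_2(R)$ by a unimodular (block-triangular, $\pm1$-diagonal) change of basis; hence $\mathrm{Im}(\partial_3)$ is a direct summand, the quotient is free on $K_2\setminus e(K_3)$, and $H_2(R)$, as a subgroup of that free group, is free. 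This is correct, and as a bonus it re-proves $H_3(R)=0$.

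The genuine gap is $H_1(R)=0$, which carries essentially all the difficulty of Theorem~\ref{T:main}, and your proposal does not prove it: you state the plan and then explicitly defer ``carrying out this reduction.'' Citing Lemma~\ref{L:delta} gives you the $\Delta_t$-graphs, but the actual argument of Theorem~\ref{H1} still has to be executed. One builds $T$ arc by arc and, for the newly added loop $t$, processes the edges $\{r,t\}$ of a chain $\tau_0\in\mathrm{Ker}(\partial_1^{R'})$ along a spanning tree $A(t)$ of the $\Delta_t$-graph, from the leaves to the root, each step absorbing $n_0(r_0,t)+n_1(r_1,t)$ into $\pm n_0\,\partial_2(r_0,r_1,t)$ plus a term $(r_0,r_1)\in C_1(R)$ and a modified coefficient at the parent; one must then argue that the coefficient of the single surviving root edge $(r_k,t)$ vanishes, because the vertex $\{t\}$ appears in $\partial_1\tau_0$ only through that edge; finally the residual chain lies in $\mathrm{Ker}(\partial_1^{R})=\mathrm{Im}(\partial_2^{R})$ by the induction hypothesis, and is lifted back through the inclusion $C_2(R)\subseteq C_2(R')$. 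None of this appears in your write-up, and your one concrete claim about how the filling ends---that each fundamental cycle of a mixed edge ``is matched with the boundary of a single $2$-simplex''---is not correct as stated: the filling $2$-chain is in general long, built from many triangles $\{r,r',t\}$, and the entire purpose of the tree $A(t)$ is to organize that multi-step cancellation. So you have identified the right strategy (indeed, the paper's own), but the step in which the theorem's real content resides is left unproven.
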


Let us begin proving Theorem~\ref{T:main} by first noting
\begin{lemma}\label{L:0}
$H_0(R)\cong \mathbb{Z}$.
\end{lemma}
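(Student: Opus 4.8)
The plan is to reduce the computation of $H_0(R)$ to a connectivity statement about the $1$-skeleton of the loop nerve $K(R)$. Since there is no chain group in degree $-1$, the boundary map $\partial_0$ is zero, so $\text{\rm Ker}(\partial_0)=C_0(R)$ and $H_0(R)=C_0(R)/\text{\rm Im}(\partial_1)$. By the standard identification of zeroth simplicial homology, this quotient is free abelian of rank equal to the number of connected components of the graph formed by the $0$- and $1$-simplices of $K(R)$. It therefore suffices to prove that this graph is connected, whence $H_0(R)\cong\mathbb{Z}$.

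The key step is to exhibit a connected $1$-skeleton. First I would note that the full subcomplex of $K(R)$ spanned by the loops of $S$ is exactly the nerve $K(S)$, which, as recorded earlier, is geometrically a tree isomorphic to $\text{\rm Tr}(S)$ and hence connected; the analogous statement holds for the loops of $T$. It then remains to link these two trees by a single mixed $1$-simplex. For this I would use the rainbow: both $S$ and $T$ contain the rainbow arc $(0,n+1)$, so each has a rainbow loop — call them $s_\infty\in S$ and $t_\infty\in T$ — and by the loop definition the formal vertices $0$ and $n+1$ lie in both. Hence $\{0,n+1\}\subseteq s_\infty\cap t_\infty$, so $s_\infty\cap t_\infty\neq\varnothing$ and $\{s_\infty,t_\infty\}$ is a mixed $1$-simplex bridging the $S$-tree and the $T$-tree. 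The union of two connected subgraphs joined by an edge is connected, so the whole $1$-skeleton is connected.

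The argument is short, and the only point requiring care is the existence of this bridging simplex: I expect the main (and minor) obstacle to be the edge case in which $s_\infty$ and $t_\infty$ coincide as vertex sets, in which event they constitute a single vertex lying in both trees rather than the endpoints of a genuine mixed edge. Either way the two trees meet, so connectivity — and hence $H_0(R)\cong\mathbb{Z}$ — follows. No appeal to the finer intersection structure of Proposition~\ref{3loops} or Lemmas~\ref{K2}--\ref{exposed} is needed beyond the tree structure of $K(S)$ and $K(T)$.
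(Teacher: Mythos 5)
Your proposal is correct and follows essentially the same route as the paper: both identify $H_0(R)$ with the free abelian group on the connected components of the $1$-skeleton, observe that the $S$-loops and $T$-loops span the two trees $K(S)$ and $K(T)$, and bridge them by the mixed $1$-simplex formed by the two rainbow loops, which share the vertices $0$ and $n+1$. Your extra care about the degenerate case where the two rainbow loops coincide as vertex sets is a minor refinement the paper glosses over, but it does not change the argument.
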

\begin{proof}
By construction, the $1$-skeleton of $K(R)$ contains the two rooted trees associated to $S$ and $T$,
respectively. Their respective root-loops are connected by a $1$-simplex as both rainbows share the vertices $0$ and $n+1$. Thus any loop is path connected to a rainbow loop implying that any loop is, modulo boundaries,
equivalent to a rainbow loop. Hence the assertion follows.
\end{proof}

Let $t\in T$ be a loop, we set
$$
S(t)=\{s\in S\mid  \{s,t\}\in K_1(R)\}$$ $$T(t)=\{t'\in T\mid  t'\prec_T t,\nexists\:t''\in
T\text{ s.t. }\alpha_{t'}\prec_T \alpha_{t''}\prec_T \alpha_{t}, \{t,t'\}\in K_1(R)\},
$$ 
the sets of $S$ and $T$ neighbors of $t$, respectively. Let $R(t)=S(t)\cup T(t)$ and let
$\text{\rm Gr}(t)$ be 
the vertex induced
sub-graph of the $1$-skeleton in the geometric realization of $K(R)$, whose vertices are the loops
in $R(t)$. By construction, $\text{\rm Gr}(t)$, does not contain the loop $t$ as a vertex.

Let $R=(S,T)$ be a bi-secondary structure with loop nerve $K(R)$ and let $t\in T$ be a loop.
A connected, spanning sub-graph, $G(t)\le \text{\rm Gr}(t)$, in which each edge satisfies
$$
\{r_a,r_b\}\in G(t) \quad \Longrightarrow \quad \{r_a,r_b,t\}\in K_2(R),
$$
is called a $\Delta_t$-graph and we refer to its edges as $\Delta_t$-edges.

\begin{thm}\label{H1}
Let $R=(S,T)$ be a bi-secondary structure and $K(R)$ be its loop nerve, then $H_1(R)=0$.
\end{thm}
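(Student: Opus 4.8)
The plan is to show that every $1$-cycle is homologous, modulo $\mathrm{Im}(\partial_2)$, to a $1$-cycle supported entirely on pure edges. Since the pure $S$-edges and the pure $T$-edges are exactly the edge sets of the trees $\mathrm{Tr}(S)$ and $\mathrm{Tr}(T)$, a cycle carried by pure edges alone lives in the forest $\mathrm{Tr}(S)\sqcup\mathrm{Tr}(T)$ and must therefore vanish. Consequently $\mathrm{Ker}(\partial_1)=\mathrm{Im}(\partial_2)$ and $H_1(R)=0$. Thus the whole difficulty is concentrated in the reduction step: systematically trading mixed edges for pure ones without ever creating a genuine new cycle.

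The engine of the reduction is the boundary relation of a $2$-simplex. If $\{r_a,r_b,t\}\in K_2(R)$ with $t\in T$, then $\partial_2(r_a,r_b,t)=\pm\bigl[(r_b,t)-(r_a,t)+(r_a,r_b)\bigr]$, so modulo $\mathrm{Im}(\partial_2)$ any edge $(r_a,t)$ incident to $t$ may be exchanged for the edge $(r_b,t)$ at the cost of the single edge $(r_a,r_b)\in\mathrm{Gr}(t)$. Here Lemma~\ref{L:delta} is indispensable: it furnishes, for each $t$, a $\Delta_t$-graph $G(t)$, and I would fix a spanning tree $\tau_t\le G(t)$ of the neighbor set $R(t)$. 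Because every $\tau_t$-edge $\{r_a,r_b\}$ satisfies $\{r_a,r_b,t\}\in K_2(R)$, iterating the exchange along $\tau_t$ transports every edge in the star of $t$ over $R(t)$ to one fixed reference edge, picking up only edges internal to $\mathrm{Gr}(t)$.

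With this tool I would process the loops $t\in T$ in the order dual to $\mathrm{Tr}(T)$, from the leaves up to the rainbow (equivalently, in decreasing simplicial order). At the current loop $t$ I transport and consolidate all edges of the cycle incident to $t$ and lying over $R(t)$ onto a single edge; the condition $\partial_1 z=0$ read off at the vertex $t$ then forces the consolidated coefficient to match that of the one remaining edge, namely the edge to the parent of $t$, so the flow through $t$ is pushed one level up the $T$-tree while the mixed contributions are converted into pure edges of $\mathrm{Tr}(S)$ and of the already-cleared part of $\mathrm{Tr}(T)$. Maintaining the invariant that, below the current level, the cycle is supported on pure edges is what would drive the induction to its conclusion at the root.

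I expect the main obstacle to be exactly this bookkeeping. A mixed edge $(s,t)$ cannot in general be pushed directly to the parent $p$ of $t$, because the vertices in $s\cap t$ need not be the two vertices that $t$ shares with $p$, so the triple $\{s,t,p\}$ may fail to be a $2$-simplex. One is therefore forced to route the transport through $G(t)$, and since connecting the $S$- and $T$-parts of $R(t)$ requires mixed $\Delta_t$-edges, each exchange can momentarily spawn a mixed edge away from $t$. The heart of the argument is to show that the inductive order together with the specific trees $\tau_t$ make this spawning strictly decrease a suitable complexity measure, so that the reduction terminates with a purely pure-edge cycle rather than cycling indefinitely.
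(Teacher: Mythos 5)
Your global plan---push every $1$-cycle, modulo $\mathrm{Im}(\partial_2)$, onto pure edges, then observe that the pure edges are exactly the edges of the forest $\mathrm{Tr}(S)\sqcup\mathrm{Tr}(T)$, so any cycle carried by them vanishes---is sound, and your engine (Lemma~\ref{L:delta} together with the exchange $\partial_2(r_a,r_b,t)=(r_b,t)-(r_a,t)+(r_a,r_b)$ along a spanning tree of $G(t)$) is exactly the paper's. The gap is the step you explicitly defer: the ``suitable complexity measure.'' That is not bookkeeping; it is the entire difficulty, and in the leaves-to-root order you chose it is far from clear that one exists. Concretely: (i) if $t$ is a leaf of $\mathrm{Tr}(T)$, then $T(t)=\varnothing$, so $G(t)$ spans only $S(t)$ and the consolidated edge $\{r^*,t\}$ is unavoidably mixed; the relation $\partial_1 z=0$ read at the vertex $t$ merely equates its coefficient (up to sign) with that of the parent edge $\{t,p\}$---it does not annihilate it---and you cannot trade the path from $r^*$ through $t$ to $p$ for the edge $\{r^*,p\}$, because $\{r^*,t,p\}$ need not be a $2$-simplex; (ii) when you later process $p$, every $\Delta_p$-edge is either a pure $S$-edge or a mixed edge $\{s,t'\}$ with $t'\in T(p)$ a \emph{child} of $p$, so the exchanges at $p$ spawn mixed edges on already-cleared loops (possibly on $t$ itself), destroying the invariant that everything below the current level is pure; nothing in your proposal rules out indefinite oscillation.

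The paper resolves precisely this point by sweeping in the opposite direction, packaged as an induction on the arcs of $T$ inserted innermost-first (rainbow last). At the moment the loop $t$ is created it has no $T$-parent, and its insertion alters no existing loop (which is also what legitimizes the chain-group embeddings $C_*(R)\to C_*(R')$ in diagram~(\ref{E:embed})). Hence \emph{all} edges at $t$ run into $R(t)=S(t)\cup T(t)$; after consolidating along the tree $A(t)$, the vertex $t$ survives in the boundary of exactly one edge $(r_k,t)$, so $\partial_1\tau_0=0$ forces $n_k=0$ outright, rather than matching it against a parent edge. Moreover the spawned $\Delta_t$-edges touch only $S$-loops and children of $t$, i.e., material absorbed by the induction hypothesis $H_1(R)=0$, so cleared loops are never recontaminated and no termination measure is needed. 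If you reverse your sweep to root-to-leaves (equivalently, adopt the paper's arc-by-arc induction), your argument closes; as written, it has a genuine hole at its central step.
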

\begin{proof}
We shall inductively build $T$, arc by arc, from bottom to top and from left to right.\\
For the induction basis assume $T=\varnothing$, then, by construction, $K(R)=K(S)$ and the geometric
realization of its nerve is a tree, with edges between loops $p,q\in S$, whenever $p$ directly covers $q$
w.r.t.~$\prec_S$. Hence $H_1(R)=0$ and the induction basis is established.

For the induction step, the induction hypothesis stipulates $H_1(S,T)=0$. We shall show that
$H_1(R')=0$, where $R'=(S,T')$ and $T'$ is obtained from $T$ by adding the arc $\alpha_t$, the maximal
arc of the newly added loop $t$. We have the following scenario
\begin{equation}\label{E:embed}
\diagram
C_2(R')    \rto & C_1(R')    \rto & C_0(R')     \rto & 0 \\
C_2(R) \uto\rto & C_1(R)\uto \rto & C_0(R) \uto \rto & 0
\enddiagram
\end{equation}
where the vertical and horizontal maps are the natural embeddings and boundary homomorphisms,
respectively. \\
{\it Claim $1$.}
$$
\text{\rm Ker}(\partial_1^{R'}) \subseteq \text{\rm Ker}(\partial_1^R) \oplus \text{\rm Im}(\partial_2^{R'}).
$$
To prove the claim, we consider $\tau_0\in C_1(R')$: 
$$
\tau_0=\sum_{e_i\in K_1(R)} n_ie_i+ \sum_{e_j=\{r,t\}, r\in R(t)} n_j e_j,
$$
distinguishing any edges, that contain $t$, in the second term. The idea is to now process the edges
containing $t$ in a systematic way. To this end we first claim\\
{\it Claim $2$.}
Let $R=(S,T)$ be a bi-secondary structure with nerve $K(R)$ and let $t$ be a $T$-loop, then, there
exists a $\Delta_t$-graph, $G(t)$.\\
We shall give the proof of Claim $2$ by means of Lemma~\ref{L:delta}, below.\\

Given a $\Delta_t$-graph, any of its vertices can be employed as the root of a spanning $G(t)$-sub-tree
and we select the $\le$-maximum $G(t)$-vertex as root. Let $A(t)$ denote this rooted tree.
Any vertex, $r\in R(t)$, appearing in an edge $\{r,t\}$, occurs in $A(t)$ and any two $A(t)$-neighbors,
$\{r_1,r_2\}$ are in the boundary of the $2$-simplex $\{r_1,r_2,t\}$.

We examine now all $R(t)$-vertices in the following systematic way: starting with $A(t)$-leaves, pick $r_0$
and its unique, immediate, $A(t)$-ancestor, $r_1$.
We then have either\\
{\it Case $1$:} $r_0\le r_1$.\\
Then $(r_0,r_1)$ is a simplex and using that $\{r_0,r_1\}$ is a $\Delta_t$-edge, we are guaranteed that
$\{r_0,r_1,t\}$ is a $2$ simplex and
$$
\partial_2(r_0,r_1,t)=(r_1,t)-(r_0,t)+(r_0,r_1).
$$
We have a closer look at the sum of simplices $n_0(r_0,t)+n_1(r_1,t)$,
\begin{eqnarray*}
  n_0(r_0,t)+n_1(r_1,t) & = & n_0(r_0,t)+n_1(r_1,t)\pm n_0(r_0,r_1) \pm n_0(r_1,t) \\
                       & = & -n_0[(r_1,t)-(r_0,t)+(r_0,r_1)] +(n_0+n_1)(r_1,t)+n_0(r_0,r_1) \\ 
                       & = & -n_0\partial_2((r_0,r_1,t)) +(n_0+n_1)(r_1,t)+n_0(r_0,r_1).
\end{eqnarray*}
This produces on the RHS a boundary, a new term $(r_0,r_1)\in C_1(R)$, a modified coefficient
for the simplex $(r_1,t)$ and the term $(r_0,t)$ has become part of a boundary. \\
{\it Case $2$:} $r_1 \le r_0$.\\
Here $(r_1,r_0)$ is a simplex and
$$
\partial_2(r_1,r_0,t)=(r_0,t)-(r_1,t)+(r_1,r_0).
$$
Furthermore,
\begin{eqnarray*}
  n_0(r_0,t)+n_1(r_1,t) & = & n_0(r_0,t)+n_1(r_1,t)\pm n_0(r_0,r_1) \pm n_0(r_1,t) \\
                       & = & n_0[(r_0,t)-(r_1,t)+(r_0,r_1)] +(n_0+n_1)(r_1,t)-n_0(r_0,r_1) \\ 
                       & = & n_0\partial_2((r_1,r_0,t)) +(n_0+n_1)(r_1,t)-n_0(r_0,r_1). 
\end{eqnarray*}
On the RHS we, again, have a boundary, a new term $(r_0,r_1)\in C_1(R)$, a modified coefficient
for the simplex $(r_1,t)$ and the term $(r_0,t)$ has become part of a boundary. 

Iterating this procedure, we step by step transform simplices $\{r,t\}$ into boundaries, working
along the tree $A(t)$, from the leaves to the root. This finally produces the following expression
for $\tau_0$
$$
\tau_0=\epsilon_0+n_k(r_k,t)+\tau_k, 
$$
where $\epsilon_0\in \text{\rm Im}(\partial_2^{R'})$, i.e.~$\epsilon_0$ is a boundary, $r_k$ is the root of $A(t)$
and $\tau_k\in C_1(R)$. At this point we cannot proceed transforming $(r_k,t)$ into a boundary and shall
argue as follows: suppose $\tau_0\in \text{\rm Ker}(\partial_1^{R'})$. Then
$$
\partial_1^{R'}(\tau_0)=\partial_1^{R'}(\epsilon_0)+n_kt-n_k r_k+\partial_1^{R'}(\tau_k).
$$
Since $\epsilon_0\in \text{\rm Im}(\partial_2^{R'})$ we certainly have $\partial_1^{R'}(\epsilon_0)
=0$. By construction of the $\Delta_t$-graph $G(t)$, the $0$-simplex $\{t\}$ does not appear in
$\partial_1^{R'}(\tau_k)$, from which we conclude $n_k=0$. As a result we have $n_k r_k=0$ and
since $\partial_1^{R'}(\tau_k)=\partial_1^{R}(\tau_k)$, we have $0=\partial_1^{R'}(\tau_0)=\partial_1^{R}(\tau_k)$,
and as a result
$$
\tau_k\in \text{\rm Ker}(\partial_1^{R}).
$$
The induction hypothesis guarantees $H_1(R)=0$, i.e.~$\text{\rm Ker}(\partial_1^{R})= \text{\rm Im}(\partial_2^{R})$.
Hence $\tau_k\in \text{\rm Im}(\partial_2^{R})$, which in view of diagram~(\ref{E:embed}) implies $\tau_0\in
\text{\rm Im}(\partial_2^{R'})$ and we have proved
$\text{\rm Ker}(\partial_1^{R'})=\text{\rm Im}(\partial_2^{R'})$.
\end{proof}

It remains to show the proof of Claim $2$. To this end, let $r\in R$ be a loop with $\alpha_{r}=(a,b)$ and denote
$b(r)=b(\alpha_r)=a$ and $e(r)=e(\alpha_r)=b$.

Let $s=\dot\bigcup_{i=1}^{k}[a_i,b_i]$ be a loop in a given secondary structure $S$.
We refer to the intervals $g_0(s)=[0,a_1]$, $g_{i}(s)=[b_i,a_{i+1}]$ for $1\le i\le k-1$
and $g_{k}(s)=[b_k,n+1]$, as the gaps of the loop $s$. We call $g_0(s)$ and $g_{k}(s)$ exterior
gaps and the rest interior gaps.  


Claim $2$ now follows from
\begin{lemma}\label{L:delta}
  Let $R=(S,T)$ be a bi-secondary structure with loop-nerve, $K(R)$, and let $t\in T$ be a loop, then,
  there exists a $\Delta_t$-graph, $G(t)$.
\end{lemma}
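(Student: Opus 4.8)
The plan is to construct a $\Delta_t$-graph $G(t)$ explicitly, reading off its vertices and edges from the backbone together with the loop (fatgraph) structure of $S$, and to use the loops in $T(t)$ to bridge the gaps of $t$. Throughout I exploit the elementary fatgraph fact that each backbone edge $(v,v+1)$ is traversed by exactly one $S$-loop, which I denote $L(v)$; this loop carries both $v$ and $v+1$. Writing $t=\dot\bigcup_{i=1}^{k}[a_i,b_i]$, for every vertex $v$ of $t$ I set $L^-(v)=L(v-1)$ and $L^+(v)=L(v)$, the two $S$-loops meeting $v$ from the left and from the right. Since each carries $v$, and $v$ lies on $t$, both belong to $S(t)$; moreover, if $v$ is $S$-unpaired then $L^-(v)=L^+(v)$, while if $v$ is $S$-paired these are precisely the two $S$-loops incident to $v$.

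First I would build, block by block, the ``horizontal'' part of $G(t)$. Along a block $[a_i,b_i]$ consecutive backbone edges share loops, $L^+(v)=L^-(v+1)$, so the $S$-loops $L(a_i-1),L(a_i),\dots,L(b_i)$ form a walk; whenever two consecutive ones differ (equivalently, at each $S$-paired $v$) I add the edge $\{L^-(v),L^+(v)\}$, whose endpoints share $v\in t$, so that $\{L^-(v),L^+(v),t\}\in K_2(R)$ and the edge is a $\Delta_t$-edge. The loops of block $i$ thus form a connected subgraph. Next I would glue the blocks together: for each interior gap the direct child $t_i'\in T(t)$ with $\alpha_{t_i'}=(b_i,a_{i+1})$ shares $b_i$ with $L^+(b_i)$ and $a_{i+1}$ with $L^-(a_{i+1})$, so the edges $\{t_i',L^+(b_i)\}$ and $\{t_i',L^-(a_{i+1})\}$ are $\Delta_t$-edges (their common vertices $b_i$, resp.\ $a_{i+1}$, lie on $t$) that connect the walk of block $i$ to that of block $i+1$ through $t_i'$. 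The exterior endpoints $a_1$ and $b_k$ of $\alpha_t$ need no bridging: $L^-(a_1)$ and $L^+(b_k)$ attach to the first and last blocks at $a_1$ and $b_k$ exactly as above.

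It then remains to verify that the graph so produced is spanning and connected. Connectivity is immediate: each block contributes a connected walk, and the children $t_1',\dots,t_{k-1}'$ chain these walks into a single connected graph. For the spanning property I would check that the vertex set is exactly $R(t)$. Every $t'\in T(t)$ is a direct child of $t$, hence equals some $t_i'$ and appears; conversely every direct child shares $\{b_i,a_{i+1}\}$ with $t$, so $T(t)=\{t_1',\dots,t_{k-1}'\}$ is exhausted. For $S(t)$, any $s\in S(t)$ contains some vertex $v$ of $t$, and by the remark above the only $S$-loops incident to $v$ are $L^-(v)$ and $L^+(v)$, both of which occur among the $L(\cdot)$ used in the blocks; hence $s$ is a vertex of $G(t)$.

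The step I expect to be the main obstacle is exactly this spanning-plus-connectivity bookkeeping at the block boundaries: one must be certain that no $S$-loop meeting $t$ is overlooked---in particular the loops $L^+(b_i)$ and $L^-(a_{i+1})$ sitting ``over'' the interior gaps, which enter the graph only through the children $t_i'$---and that the loops of $T(t)$ genuinely suffice to connect consecutive blocks. By contrast, the defining $\Delta_t$-condition $\{r_a,r_b,t\}\in K_2(R)$ is never in doubt, since every edge I introduce carries an explicit vertex of $t$ common to its two endpoints. Once spanning and connectivity are established, $G(t)$ is the desired $\Delta_t$-graph and Claim~$2$ follows.
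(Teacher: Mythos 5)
Your construction is correct, and it takes a genuinely different route from the paper's. The paper proves Lemma~\ref{L:delta} by induction on the number of non-rainbow arcs of $S$: a distinguished arc is removed, the induction hypothesis supplies a $\Delta_t$-graph $\overline{G}(t)$ for $\overline{R}=(\overline{S},T)$, and the arc is reinserted, with a multi-case patching argument ($t$-exposed endpoints; both endpoints in one $t$-gap; endpoints in distinct gaps, with sub-cases according to whether $S\setminus S(t)$ exceeds the rainbow). You instead exhibit $G(t)$ in one pass: along each block $[a_i,b_i]$ of $t$ the loops $L(v)$ over the backbone edges form a path whose consecutive distinct members share an $S$-paired vertex of the block (so each edge is a $\Delta_t$-edge), and the gap loops $t_i'$, which are exactly the elements of $T(t)$, bridge consecutive blocks through $b_i,a_{i+1}\in t$; spanning holds because every $S$-loop meeting $t$ at a vertex $v$ is one of $L(v-1),L(v)$, both of which occur in the walk of the block containing $v$. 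Your approach eliminates the induction and its case analysis and produces a concrete normal form for a $\Delta_t$-graph (a chain of block paths linked by gap loops), which is more informative than bare existence; the cost is reliance on the loop-decomposition fact that each backbone edge determines a unique loop $L(v)$ containing both its endpoints, and that the $S$-loops containing $v$ are precisely $L(v-1)$ and $L(v)$. That fact is indeed elementary, following from the paper's facts that a vertex lies in one or two loops according as it is unpaired or paired, but it deserves an explicit proof, and with the right convention: ``the unique loop containing $v$ and $v+1$'' is ambiguous when $(v,v+1)$ is itself an arc, so $L(v)$ must be defined as the loop of the $\prec_S$-minimal arc enclosing $[v,v+1]$, which is what makes your incidence claims at paired vertices come out correctly. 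One boundary case also needs a word: when $t$ is the rainbow loop of $T$, so $a_1=0$ and $b_k=n+1$, the symbols $L^-(a_1)=L(-1)$ and $L^+(b_k)=L(n+1)$ are undefined and must simply be dropped from the walks; spanning is unaffected, since the only $S$-loop containing $0$ (resp.~$n+1$) is the $S$-rainbow loop $L(0)$ (resp.~$L(n)$).
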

\begin{proof}
Let $S(t)$ and $T(t)$ be the $S$ and $T$ neighbors of $t$ respectively.
We prove the lemma by induction on $N$, the number of non-rainbow arcs in $S$. To this end, let us
first consider the induction base case $N=0$.\\
As there are no arcs other than the rainbow, $\alpha_r$, we have $S(t)=\{r\}$. By construction,
$b(r)\in g_0(t)$ and $e(r)\in g_{k}(t)$, the exterior $t$-gaps. We make the Ansatz
$$
G(t)=\text{\rm Star}(r)=(R(t),\{\{r,t'\}|t'\in T(t)\}).
$$
By construction, $\text{\rm Star}(r)$ is a connected spanning sub-graph of $\text{\rm Gr}(t)$.
Furthermore, $\forall t'\in T(t)$ we have $b(r)< b(t)< b(t')< e(t')< e(t)< e(r)$. Hence
$$
r \cap t\cap t'=\{b(t'),e(t')\} \ne \varnothing
$$
and as a result $\{r,t,t'\}\in K_2(R(t))$. Thus, any edge $\{r,t'\}\in E(r)=\{\{r,t'\}|t'\in T(t)\}$
is a $\Delta_t$-edge and $\text{\rm Star}(r)$ is a $\Delta_t$-graph, establishing the induction basis,
see Figure~\ref{fig4}.

\begin{figure}[htbp]
    \centering
    \includegraphics[width=0.5\textwidth]{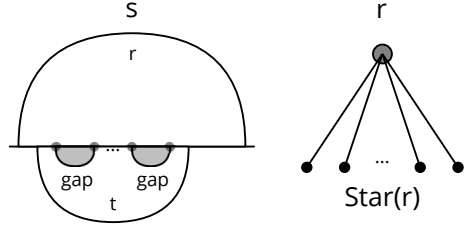}
    \caption{LHS: $S(t)=\{r\}$, RHS: $G(t)=Star(r)$.}
    \label{fig4}
\end{figure}

Let next $\overline{S}$ denote a secondary structure, having $N-1\ge 0$ arcs. By induction hypothesis,
for any such bi-secondary structure, $\overline{R}=(\overline{S},T)$ and $t\in T$, a $\Delta_t$-graph
exists. We will denote such a graph by $\overline{G}(t)$.

We shall prove the existence of a $\Delta_t$-graph as follows: first we identify and then remove a
distinguished non-rainbow arc $x\in S$. This gives us the bi-secondary structure
$\overline{R}=(\overline{S},T)$, for which the induction hypothesis applies, i.e.~a
$\Delta_t$-graph $\overline{G}(t)$ exists. We then reinsert the arc $x$ and inspect how to obtain
$G(t)$ from $\overline{G}(t)$.

Let $\text{\rm Exp}_t(S)$ be the set of non-rainbow $S$-arcs, $x$, having at least one $t$-exposed endpoint,
i.e.~either $b(x)$ or $e(x)$ are contained in $t$.

{\it Case $1$: $\text{\rm Exp}_t(S)\neq \varnothing$.}\\

\begin{figure}[htbp]
    \centering
    \includegraphics[width=0.75\textwidth]{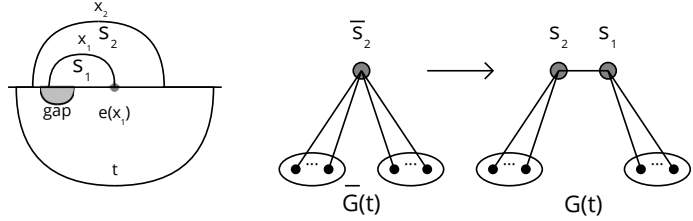}
    \caption{LHS: $x_1\in \text{\rm Exp}_t(S)$. RHS: the effect of reintroducing $x_1$, passing from
      $\overline{G}(t)$ to $G(t)$.}
    \label{fig5}
\end{figure}

Select $x_1\in \text{\rm Exp}_t(S)$. Let $s_1\in S(t)$
be the loop such that $x_1=\alpha_{s_1}$ and let $x_2$ be the arc, that directly covers $x_1$ w.r.t.~$
\prec_S$. Let $s_2\in S$ be the loop such that $\alpha_{s_2}=x_2$. W.l.o.g.~we may assume that $e(x_1)\in t.$ Clearly, $s_2\in S(t)$ since $e(s_1)
\in s_2\cap t$, see Figure~\ref{fig5}.

$x_1$-removal produces the secondary structure $\overline{S}$ and $\overline{R}=(\overline{S},T)$,
for which the induction hypothesis applies. Let $\overline{s_2}\in \overline{S}$ be such that
$\alpha_{\overline{s_2}}=x_2$. Then $\overline{s_2}\in \overline{S}(t)$ since, in absence of $x_1$,
$e(x_1)\in\overline{s_2}\cap t$. Hence $\overline{s_2}$ is a vertex in
$\overline{G}(t)=(\overline{R}(t),\overline{E})$.

Reinserting $x_1$ into $\overline{R}$ splits $\overline{s_2}$ into the two $S$-loops $s_1$ and $s_2$,
see Figure~\ref{fig5}.\\
We make the Ansatz
$$
G(t)=((\overline{R}(t)\setminus \{\overline{s_2}\})\cup \{s_1,s_2\},E),
$$
where
\begin{eqnarray*}
  E & = &
  (\overline{E}\setminus \{\{\overline{s_2},r'\}\mid r'\in \overline{R}(t)\}) \cup \{\{s_1,s_2\}\} \cup\\
& & \{\{s_1,r'\}\mid r'\in R(t)\setminus \{s_1,s_2\},\; s_1\cap r'\cap t\ne \varnothing\}\cup \\
& &  \{\{s_2,r'\}\mid r'\in R(t)\setminus \{s_1,s_2\},\; s_2\cap r'\cap t\ne \varnothing\}.
\end{eqnarray*}
Since $\overline{s_2}=s_1\cup s_2$ as sets, and $\overline{R}(t)\setminus \{\overline{s_2}\}=
R(t)\setminus \{s_1,s_2\}$, we have
$$
\{r'\in \overline{R}(t)\mid \{r',\overline{s_2}\} \in\overline{E} \} = \{r'\in \overline{R}(t)\setminus
\{\overline{s_2}\}
\mid \{r',s_1\}\in E \; \text{\rm or }\{r',s_2\}\in E\}.
$$
Accordingly, any $\overline{R}(t)$-vertex connected in $\overline{G}(t)$ to $\overline{s_2}$ is, when considered
in $R(t)$, connected to either $s_1$ or $s_2$. In view of $\{e(x_1)\}\subset (s_1\cap\ s_2\cap t)$, we can conclude that
$s_1$ and $s_2$ are connected by a $\Delta_t$-edge. This guarantees that $G(t)$ is a connected spanning sub-graph of
$\text{\rm Gr}(t)$.

{\it Case $2$: $\text{\rm Exp}_t(S)=\varnothing$.}\\
Having no arcs with exposed endpoints, for any loop $s\in S$, there exist $t$-gaps, containing $b(s)$ and
$e(s)$. Suppose first, there exists an arc $x$ having both endpoints in the same gap, see Figure~\ref{fig6}.
The associated loop, $s$, having $\alpha_s=x$, is not contained in $S(t)$. Upon inspection
$$
\overline{S}(t)=S(t) \; \text{\rm and } G(t)=\overline{G}(t),
$$
Hence the induction hypothesis directly implies the existence of $G(t)$.

\begin{figure}[htbp]
    \centering
    \includegraphics[width=0.25\textwidth]{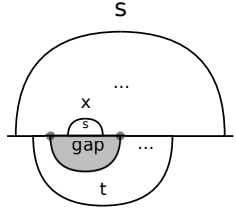}
    \caption{The case where $b(x)$ and $e(x)$ are contained in the same $t$-gap.}
    \label{fig6}
\end{figure}

It thus remains to discuss $S$-arcs, whose endpoints belong to distinct $t$-gaps, see Figure~\ref{fig7}.
We shall distinguish the following two scenarios:

{\it (a) $(S\setminus S(t))\setminus \{r\}\ne \varnothing$, where $\alpha_r$ is the rainbow arc.}\\
We shall show that the removal of an arc  $x=\alpha_s,s\in(S\setminus S(t))\setminus \{r\}$, will not affect $G(t)$,
aside from relabeling of a single vertex.
Since $s_1\notin S(t)$ we have $\overline{s_2}\notin \overline{S}(t)$ if and only if $s_2\notin S(t)$.
In this case we set $G(t)=\overline{G}(t)$ and the assertion is directly implied by the induction
hypothesis.
In case of $\overline{s_2}\in\overline{S}(t)$, $G(t)$ is obtained from $\overline{G}(t)$ by relabeling
$\overline{s_2}$ to $s_2$ exhibiting no other changes, see Figure~\ref{fig7}: 
$$
G(t)=((\overline{R}(t)\setminus \{\overline{s_2}\})\cup \{s_2\},E),
$$
where
$$
E=(\overline{E}\setminus \{\{\overline{s_2},r'\}\in \overline{E}\mid r'\in \overline{R}(t)\})
\cup\{\{s_2,r'\}\mid r'\in \overline{R}(t), \{\overline{s_2},r'\}\in \overline{E}\}
$$
and $G(t)$ is consequently a $\Delta_t$-graph.

\begin{figure}[htbp]
    \centering
    \includegraphics[width=0.75\textwidth]{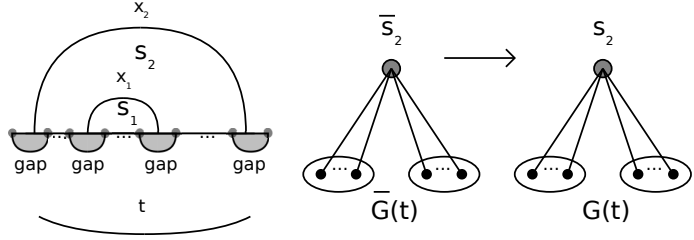}
    \caption{LHS: all $S$-arcs having their endpoints in distinct $t$-gaps. RHS: $(a)$, $\overline{s_2}\in \overline{S}(t)$,
    $G(t)$ is obtained by a relabelling of $\overline{G}(t)$.}
    \label{fig7}
\end{figure}

{\it (b) $(S\setminus S(t))\setminus \{r\}= \varnothing$, where $\alpha_r$ is the rainbow arc.}\\
We then have either $S\setminus S(t)=\varnothing$ or $S\setminus S(t)=\{r\}$.
In the latter case we select $x$ to be an arc, corresponding to a loop $s_1$, that is immediately
covered by $\alpha_r$. Let $s_2=r$. Since $r\notin S(t)$ we make the Ansatz
$$
G(t)=((\overline{R}(t)\setminus \{\overline{s_2}\})\cup \{s_1\},E),
$$
where
$$
E=(\overline{E}\setminus \{\{\overline{s_2},r'\}\in \overline{E}\mid r'\in \overline{R}(t)\})
           \cup\{\{s_1,r'\}\mid r'\in \overline{R}(t), \{\overline{s_2},r'\}\in \overline{E}\}.
$$
Accordingly, $G(t)$ is obtained from $\overline{G}(t)$ by relabeling $\overline{s_2}$ by $s_1$ and
$G(t)$ is a $\Delta_t$-graph, see Figure~\ref{fig8}.

It remains to analyze $S\setminus S(t)=\varnothing$, i.e.~all $S$-arcs are contained in $S(t)$,
where we recall we reduced the analysis to arcs whose endpoints belong to different $t$-gaps.

\begin{figure}[htbp]
    \centering
    \includegraphics[width=0.75\textwidth]{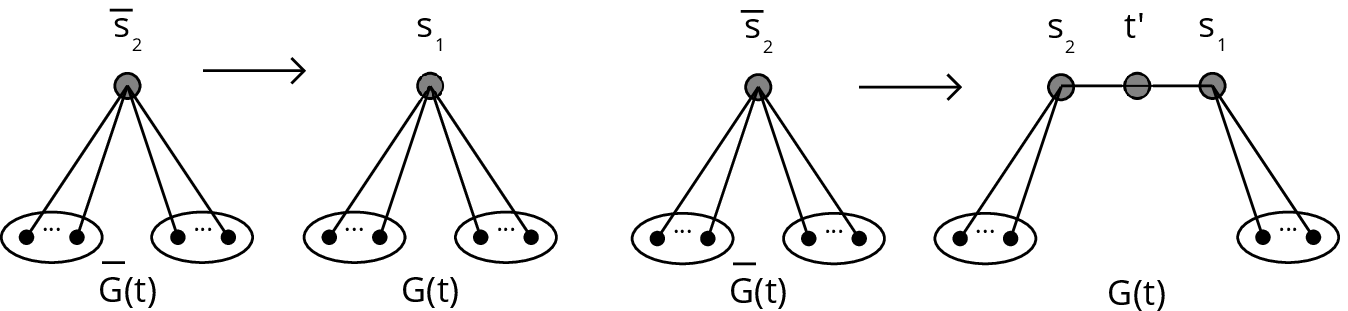}
    \caption{LHS: the case $S\setminus S(t)=\{r\}$. RHS: the case $S\setminus S(t)=\varnothing$.}
    \label{fig8}
\end{figure}

Suppose now all $S$-loops are contained in $S(t)$. Consider the set of all minimal arcs of $S$ w.r.t.~$\prec_S$. We claim there exists one such minimal arc, call it $\alpha_{s_1}$, such that its immediate cover w.r.t.~$\prec_S$, call it $\alpha_{s_2}$, is such that $s_2$ contains at least one of the endpoints of one of the $t$-gaps that contain one of the endpoints of $\alpha_{s_1}$. To show this we observe that if all $t$-gaps would have their endpoints inside loops corresponding to $\prec_S$-minimal arcs, then at least one arc that immediately covers such minimal arcs would not correspond to a loop in $S(t)$. Hence, there must be a loop $s_1$ with $\alpha_{s_1}$  minimal w.r.t.~$\prec_S$ and an arc $\alpha_{s_2}$ that immediately covers $\alpha_{s_1}$, such that $s_2$ contains one of the endpoints of a gap that contains $b(s_1)$ or $e(s_1)$.

Let us denote this gap by $h$. W.l.o.g.~we can assume $e(s_1)\in h$, see
Figure~\ref{fig9}. Then, the minimality of $s_1$ guarantees that $s_1$ contains the other endpoint of the gap
$h$. We shall now remove $x_1=\alpha_{s_1}$.

\begin{figure}[htbp]
    \centering
    \includegraphics[width=0.25\textwidth]{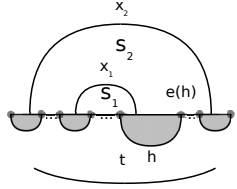}
    \caption{$x_1=\alpha_{s_1}$ is minimal w.r.t. $\prec_S$.}
    \label{fig9}
\end{figure}

We consider the loop $t'$ associated to $h$ and note that $s_1\cap t'\cap t\neq \varnothing$ as well as
$s_2\cap t'\cap t\neq \varnothing$. Accordingly, $t'$ connects $s_1,s_2$ in $R(t)$ by means of
$\Delta_t$-edges, see Figure~\ref{fig8} and we immediately obtain that
$$
G(t)=((\overline{R}(t)\setminus \{\overline{s_2}\})\cup \{s_1,s_2\},E),
$$
where 
\begin{eqnarray*}
  E & = & (\overline{E}\setminus \{\{\overline{s_2},r'\}\in \overline{E}\mid r'\in \overline{R}(t)\}) \cup\\
    &   & \{\{s_1,r'\}\mid r'\in R(t), s_1\cap r'\cap t\ne \varnothing\}\cup\\
    &   & \{\{s_2,r'\}\mid r'\in R(t), s_2\cap r'\cap t\ne \varnothing\},
\end{eqnarray*}
is a $\Delta_t$-graph for $R(t)$. This concludes the proof of the induction step and the lemma follows.
\end{proof}


Next we compute $H_2(R)$, 
\begin{thm}\label{T:H2}
For any bi-secondary structure, $R=(S,T)$, with loop nerve $K(R)$, we have
$$
 H_2(R)\cong \bigoplus_{i=1}^k\mathbb{Z},
$$
i.e.~$H_2(R)$ is free of finite rank.
\end{thm}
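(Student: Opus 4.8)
The plan is to realize $H_2(R)$ as a subgroup of a finitely generated free abelian group, from which both freeness and finiteness of rank follow at once. First I would pin down the dimension of $K(R)$: by Proposition~\ref{3loops}(1) no three distinct loops of a single secondary structure share a vertex, so any simplex of $K(R)$ uses at most two $S$-loops and at most two $T$-loops and hence has dimension at most $3$. Consequently $C_d(R)=0$ for $d\ge 4$, every chain group is free of finite rank, and
$$
H_2(R)=\text{\rm Ker}(\partial_2)\,/\,\text{\rm Im}(\partial_3)=:Z_2/B_2 .
$$

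The crux is to show that $\partial_3$ is a \emph{split} monomorphism, so that $B_2=\text{\rm Im}(\partial_3)$ is a direct summand of $C_2(R)$. Here Lemma~\ref{exposed} is the engine. For every $3$-simplex $Y$ I would fix a $Y$-exposed $2$-face $W(Y)$. Since an exposed face is contained in no $3$-simplex other than $Y$, the assignment $Y\mapsto W(Y)$ is injective, and in the matrix of $\partial_3$ (rows indexed by $2$-simplices, columns by $3$-simplices) the row $W(Y)$ has a single nonzero entry, equal to $\pm 1$, in column $Y$. Thus the square submatrix on the rows $\{W(Y)\}$ and the columns $\{Y\}$ is diagonal with $\pm 1$ diagonal.

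This pivot structure supplies an explicit retraction $r\colon C_2(R)\to C_3(R)$: set $r(W(Y))=\pm Y$ (with signs matching $\partial_3$) and $r(W)=0$ for every $2$-simplex $W$ not of the form $W(Y)$. The only $2$-face of $Y$ in the image of $W(\cdot)$ is $W(Y)$ itself, since $W(Y')$ being a face of $Y$ forces $Y'=Y$ by exposedness; hence $r\circ\partial_3=\mathrm{id}_{C_3(R)}$. Therefore $\partial_3$ splits and $B_2$ is a direct summand of $C_2(R)$, so $C_2(R)/B_2$ is free of finite rank. Finally, from $B_2\subseteq Z_2\subseteq C_2(R)$ the inclusion induces a monomorphism $H_2(R)=Z_2/B_2\hookrightarrow C_2(R)/B_2$; as a subgroup of a finitely generated free abelian group, $H_2(R)$ is free of finite rank.

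The main obstacle is the middle step, the splitting of $\partial_3$, and it is precisely what Lemma~\ref{exposed} is tailored to provide; the dimension bound and the closing subgroup-of-a-free-group argument are routine once the exposed faces deliver the $\pm 1$ pivots. One subtlety worth double-checking in the full write-up is the consistent choice of signs so that $r\circ\partial_3=\mathrm{id}$ holds on the nose, but this is a bookkeeping matter rather than a genuine difficulty.
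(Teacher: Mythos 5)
Your proposal is correct, and it rests on the same two pillars as the paper's proof: Lemma~\ref{exposed} supplying exposed $2$-faces, and the final embedding $H_2(R)=Z_2/B_2\hookrightarrow C_2(R)/B_2$ together with the fact that a subgroup of a finitely generated free abelian group is free. Where you differ is in how freeness of $C_2(R)/\text{\rm Im}(\partial_3)$ is established. The paper proves it is torsion-free: assuming $n\bigl(\sum_r\lambda_r Z_r\bigr)=\sum_i\alpha_i\partial_3(Y_i)$, it compares coefficients at the exposed faces to get $\alpha_i=n\lambda_{r(U_{u(i)})}$, divides by $n$, and concludes the would-be torsion class is zero; since everything is finitely generated, torsion-free implies free. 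Separately, the paper needs $\text{\rm Ker}(\partial_3)=0$, which it deduces from $C_4(R)=0$ (Lemma~\ref{d>3}) and $H_3(R)=0$ (Theorem~\ref{H3}, itself another application of exposed faces). You instead promote the same pivot observation --- each chosen exposed face $W(Y)$ indexes a row of the matrix of $\partial_3$ with a single $\pm1$ entry, in column $Y$ --- into an explicit retraction $r$ with $r\circ\partial_3=\mathrm{id}$, so that $\partial_3$ is a split monomorphism and $\text{\rm Im}(\partial_3)$ is a direct summand of $C_2(R)$. This is a genuine strengthening that buys economy: injectivity of $\partial_3$ (hence $H_3(R)=0$, given the dimension bound) falls out as a corollary rather than requiring a separate theorem, and freeness of the quotient is immediate from the splitting rather than routed through the torsion-free-implies-free step. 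The one point your write-up correctly flags --- and which must be checked, not just asserted --- is that $r(\partial_3(Y))$ picks up no contribution from faces $W(Y')$ with $Y'\neq Y$; your exposedness argument ($W(Y')$ a face of $Y$ forces $Y'=Y$) settles this, and with $r(W(Y))=\epsilon_Y Y$ where $\epsilon_Y$ is the sign of $W(Y)$ in $\partial_3(Y)$, the identity $r\circ\partial_3=\mathrm{id}$ holds exactly.
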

\begin{proof}
{\it Claim $1$. $\text{\rm Im}(\partial_3) \cong C_3(R)$, i.e.~$\text{\rm Im}(\partial_3)$ is a free
Abelian group and freely generated by $P=\{\partial_3(Y_i)\mid Y_i \; \text{\rm is a $3$-simplex} \}$.}

Claim $1$ is a consequence of two facts: (a) $C_4(R)=0$ and (b) $H_3(R)=0$, both of which we prove below.
It is obtained as follows: $C_4(R)=0$ guarantees $\text{\rm Im}(\partial_4)=0$, which in view of
$0=H_3(R)=\text{\rm Ker}(\partial_3)/ \text{\rm Im}(\partial_4)$ implies $\text{\rm Ker}(\partial_3)=0$. This in turn implies
that $\partial_3$ is an embedding, i.e.~$\text{\rm Im}(\partial_3)\cong C_3(R)$, whence $\text{\rm Im}(
\partial_3)$ is a free Abelian group. $P$ certainly generates $\text{\rm Im}(\partial_3)$ and a
$\mathbb{Z}$-linear combination
$$
\sum_{j}\lambda_j \partial_3(Y_j)=\partial_3(\sum_{j}\lambda_j Y_j)=0
$$
means that $\sum_{j}\lambda_j Y_j\in \text{\rm Ker}(\partial_3)$. Since the latter is trivial we arrive at
$$
\sum_{j}\lambda_j Y_j=0
$$
which implies $\lambda_j=0$, for any $j$ appearing in this sum. This shows that the $P$-elements are
$\mathbb{Z}$-linear independent.

Let $Y_i\in K_3(R)$, $0\le i\le k$ denote the generators of $C_3(R)$. Lemma~\ref{exposed} guarantees
that each $3$-simplex $Y$ has at least two $Y$-exposed $2$-faces. Hence, to each generator $Y_i\in
K_3(R)$ there correspond at least two generators of the free group $C_2(R)$ that appear as terms only
in the image $\partial_3(Y_i)$.
Let us write
$$
\partial_3(Y_i)=\sum_{u(i)} U_{u(i)} + \sum_{c(i)}C_{c(i)},
$$
distinguishing exposed, signed and covered, signed $2$-faces of $Y_i$, i.e.~we consider the
sign, induced by the boundary map, to be part of $U_{u(i)}$ and $C_{c(i)}$, respectively.
In particular, for any $u(i)$, there exists an unique $r$, such that we have either $U_{u(i)}=
+ Z_r$ or $U_{u(i)}= - Z_r$ where $Z_r$ is a generator of $C_2(R)$.

{\it Claim $2$. $C_2(R)/\text{\rm Im}(\partial_3)$ is free.}

We consider $\overline{C_2(R)}=C_2(R)/\text{\rm Im}(\partial_3)$ as a $\mathbb{Z}$-module and suppose $X$
is a torsion element of order $n$ in $\overline{C_2(R)}$. Then we can represent $X$ as
$$
X = \sum_r \lambda_r Z_r + \text{\rm Im}(\partial_3),
$$
where, w.l.o.g.~we assume that all $\lambda_r\neq 0$.
Since $X$ is a torsion element, we have $nX=0$ in $\overline{C_2(R)}$, i.e.~ 
\begin{eqnarray*}
  n (\sum_r \lambda_r Z_r ) & = &  \sum_{i=1}^k \alpha_i \partial_3(Y_i) \\
  & = & \sum_{i=1}\alpha_i (\sum_{u(i)} U_{u(i)}) + \sum_{i=1}\alpha_i (\sum_{c(i)} C_{c(i)}) 
\end{eqnarray*}
where $\lambda_r,\alpha_i\in \mathbb{Z}$ are unique nonzero integer coefficients. Clearly, each unique
signed $2$-face, $U_{u(i)}$ of the RHS corresponds to a unique generator $Z_{r(U_{u(i)})}$ and hence,
irrespective of the particular choice of the $u(i)$ and the sign of $U_{u(i)}$, we obtain
for any $i$ of the sum on the RHS
$$
n \lambda_{r(U_{u(i)})} = \alpha_i,
$$
only depending on the index $i$. This is an equation in $\mathbb{Z}$ and hence implies
$\alpha_i\equiv 0\mod n$. Accordingly, we derive
$$
 (\sum_r \lambda_r Z_r )  =   \sum_{i=1}^k \frac{\alpha_i}{n} \partial_3(Y_i),
$$
which means $X\in \text{\rm Im}(\partial_3)$, since $\frac{\alpha_i}{n}\in\mathbb{Z}$, i.e.~$X=0$.
By transposition we have proved that $X\neq 0$ in $\overline{C_2(R)}$ implies for any
$n\in \mathbb{N}$, $nX\neq 0$ in $\overline{C_2(R)}$, whence $\overline{C_2(R)}$ is free
and Claim $2$ is proved.

As a result, $\text{\rm Ker}(\partial_2)/\text{\rm Im}(\partial_3)$ is, as a subgroup of the
free group $\overline{C_2(R)}$, itself free and the theorem is proved. 
\end{proof}

It remains to show $C_d(R)=0$ for $d\ge 4$ and $H_3(R)=0$.

\begin{lemma}\label{d>3}
Let $R=(S,T)$ be a bi-secondary structure with loop-nerve $K(R)$ and let $d\ge 4$,
then $K_d(R)=\varnothing$, $C_d(R)=0$ and $H_d(R)=0$. 
\end{lemma}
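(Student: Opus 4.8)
The plan is to establish the three assertions sequentially, with the geometric content concentrated entirely in the claim that $K_d(R)=\varnothing$ for $d\ge 4$; the vanishing of $C_d(R)$ and $H_d(R)$ will then follow formally. First I would show there can be no $4$-simplex. A $d$-simplex is a collection of $d+1$ loops with nonempty common intersection, so a $4$-simplex would be a set of five loops $\{r_0,\dots,r_4\}$ with $\bigcap_{i=0}^{4} r_i\neq\varnothing$. By Proposition~\ref{3loops}(1), no vertex lies in three distinct loops of a \emph{single} secondary structure, so among any three loops from the same structure the intersection is empty. Hence any simplex can contain at most two $S$-loops and at most two $T$-loops, giving a hard ceiling of four loops and thus dimension at most $3$. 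This is exactly the combinatorial argument already used in Lemma~\ref{K3} to force the shape $\{s_0,s_1,t_0,t_1\}$ on a $3$-simplex, and applying it to a hypothetical fifth loop yields a contradiction regardless of which structure it comes from.

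Once $K_d(R)=\varnothing$ for $d\ge 4$ is in hand, the remaining claims are immediate. Since $C_d(R)$ is by definition the free abelian group on the $d$-simplices $K_d(R)$, having no $d$-simplices forces $C_d(R)=0$ for all $d\ge 4$. Consequently the chain complex terminates, and for $d\ge 4$ the group $H_d(R)=\text{\rm Ker}(\partial_d)/\text{\rm Im}(\partial_{d+1})$ is a quotient of the trivial group and therefore vanishes.

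The one value requiring a separate word is $d=4$ itself, where $H_4(R)=\text{\rm Ker}(\partial_4)/\text{\rm Im}(\partial_5)$: here $C_4(R)=0$ gives $\partial_4=0$ with $\text{\rm Ker}(\partial_4)=C_4(R)=0$, so $H_4(R)=0$ as well. Thus the statement holds uniformly for every $d\ge 4$.

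I do not anticipate a genuine obstacle here; the lemma is essentially a bookkeeping consequence of the weight-two bound on intersections combined with Proposition~\ref{3loops}. The only point deserving care is to state the counting argument for the absence of $4$-simplices cleanly, namely that a simplex's loop set partitions into its $S$-part and its $T$-part, each of size at most two by Proposition~\ref{3loops}(1). I would present that partition explicitly so that the reader sees the dimension bound $\dim\le 3$ as the direct cause of all three conclusions, rather than re-deriving it implicitly from Lemma~\ref{K3}.
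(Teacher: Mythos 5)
Your proposal is correct and takes essentially the same approach as the paper: both reduce everything to showing $K_d(R)=\varnothing$ for $d\ge 4$ via Proposition~\ref{3loops}, the paper phrasing it as a pigeonhole (five loops force three in one secondary structure, which cannot intersect) and you phrasing it contrapositively (at most two loops per structure, hence at most four loops per simplex), after which $C_d(R)=0$ and $H_d(R)=0$ follow formally in both treatments.
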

\begin{proof}
For any $Y=\{r_{i_0},\ldots,r_{i_d}\}\in K_d(R)$ for some $d\geq 4$ we have
$\Omega(Y)=\bigcap_{k=0}^{d} r_{i_k}\neq \varnothing$.
Since $d\geq 4$, $|Y|\ge 5$, whence at least three loops $r'_0,r'_1,r'_2\in Y$ are contained
in the same secondary structure, which is a contradiction to Proposition~\ref{3loops}, which stipulates
that three loops of one secondary structure intersect only trivially. 
\end{proof}

Next we show that $H_3(R)=0$.
\begin{thm}\label{H3}
Let $R=(S,T)$ be a bi-secondary structure with loop nerve $K(R)$, then $H_3(R)=0$.
\end{thm}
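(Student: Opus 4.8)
The plan is to reduce the claim to the injectivity of $\partial_3$ and then exploit the exposed $2$-faces furnished by Lemma~\ref{exposed}. First I would invoke Lemma~\ref{d>3}, which gives $C_4(R)=0$ and hence $\text{\rm Im}(\partial_4)=0$, so that
$$
H_3(R)=\text{\rm Ker}(\partial_3)/\text{\rm Im}(\partial_4)=\text{\rm Ker}(\partial_3).
$$
It therefore suffices to show that $\partial_3$ is injective, i.e.~that $\text{\rm Ker}(\partial_3)=0$; this is exactly fact (b) relied upon in the proof of Theorem~\ref{T:H2}.

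To establish injectivity I would argue by contradiction. Suppose there is a nonzero cycle $z=\sum_{i}\lambda_i Y_i\in\text{\rm Ker}(\partial_3)$, where the $Y_i\in K_3(R)$ are pairwise distinct $3$-simplices and every $\lambda_i\neq 0$. Pick any index $i$ in the support of $z$. By Lemma~\ref{exposed}, the $3$-simplex $Y_i$ possesses a $Y_i$-exposed $2$-face $U$; by the definition of an exposed face, no $3$-simplex other than $Y_i$ contains $U$ as a $2$-face. Consequently $U$ occurs as a term in $\partial_3(Y_i)$ with boundary sign $\pm 1$, hence with coefficient $\pm\lambda_i$ in $\lambda_i\,\partial_3(Y_i)$, while $U$ does not occur in any $\partial_3(Y_j)$ for $j\neq i$.

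Collecting the coefficient of the generator $U$ in the expansion $\partial_3(z)=\sum_i\lambda_i\,\partial_3(Y_i)$ therefore yields $\pm\lambda_i$, which is nonzero. This contradicts $\partial_3(z)=0$. Hence no nonzero cycle exists, so $\text{\rm Ker}(\partial_3)=0$ and $H_3(R)=\text{\rm Ker}(\partial_3)=0$, as asserted.

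The argument is essentially forced once Lemma~\ref{exposed} is available, and the only delicate point is the sign bookkeeping: one must confirm that an exposed face contributes exactly $\pm\lambda_i$ (and nothing else) to its coefficient in $\partial_3(z)$, which follows directly from the uniqueness clause in the definition of a $Y$-exposed face together with $C_4(R)=0$. The genuine conceptual obstacle—guaranteeing that every $3$-simplex owns a $2$-face shared with no other $3$-simplex—has already been overcome in Lemma~\ref{exposed}, so no further structural input is needed here.
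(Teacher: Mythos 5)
Your proof is correct and follows essentially the same route as the paper: both arguments hinge on Lemma~\ref{exposed}, observing that a $Y_i$-exposed $2$-face occurs in $\partial_3(Y_i)$ and in no other $\partial_3(Y_j)$, so its coefficient $\pm\lambda_i$ in $\partial_3(z)$ must vanish, forcing every coefficient to zero. The only cosmetic differences are that you phrase it as a contradiction on a single support element while the paper concludes $n_i=0$ for all $i$ simultaneously, and your invocation of $\text{\rm Im}(\partial_4)=0$ is harmless but not strictly needed, since $H_3(R)$ is a quotient of $\text{\rm Ker}(\partial_3)$ in any case.
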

\begin{proof}
Consider
$$
X=\sum_{Y_i\in K_3(R)} n_{i} Y_i \in C_3(R)\in \text{\rm Ker}(\partial_3),
$$
then
\begin{eqnarray*}
  \partial_3(X) & = & \sum_{Y_i\in K_3(R)} n_{i} \partial_3(Y_i)\\
                & = & \sum_i n_i (\sum_{u(i)} U_{u(i)}) + \sum_i n_i (\sum_{c(i)} C_{c(i)}),
\end{eqnarray*}
where the $U_{u(i)}$ and $C_{c(i)}$ are the signed exposed and covered $2$-faces of $Y_i$, respectively.
Since we have at least two unique exposed $2$-faces and, by assumption, $\partial_3(X)=0$, we conclude
that for any $i$ of $X=\sum_{Y_i\in K_3(R)} n_{i} Y_i$ we have $n_i=0$. Therefore $X=0$ and
$\text{\rm Ker}(\partial_3)$ contains no nontrivial elements, whence $H_3(R)=0$.
\end{proof}

\section{Discussion} 

In the previous section, we showed that $H_2(R)$ is non-trivial and free, leading to a novel observable
for the pair of secondary structures $(S,T)$, namely the rank of $H_2(R=(S,T))$, $r(H_2(R))$.
We shall see that the generators of $r(H_2(R))$ represent key information about the ``switching sequence''
\cite{breaker2012riboswitches}, a segment of the sequence, that engages w.r.t.~each respective structure
in a distinct, mutually exclusive fashion.

It is well known from experimental work that native riboswitch pairs, ncRNAs, exhibit two distinct,
mutually exclusive, stable secondary structures \cite{serganov2013decade}. We analyzed all nine riboswitch
sequences contained in the Swispot database \cite{barsacchi2016swispot} and observed that $r(H_2(R))=1$.
In Figure ~\ref{conflicting} we illustrate the connection between a $H_2(R)$-generator and pairs of mutually
exclusive substructures.
\begin{figure}[htbp]
    \centering
    \includegraphics[width=0.5\textwidth]{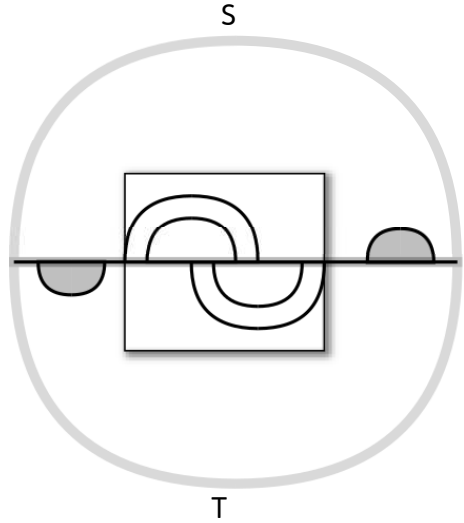}
    \caption{$H_2(R)$-generators and mutually exclusive structure pairs:
      the two helices (boxed) are mutually exclusive, while the two substructures (shaded) are not.
      The former two, together with the two rainbows correspond to a generator of $H_2(R)$.}
    \label{conflicting}
\end{figure}
The ranks, $r(H_2(R))$, for uniformly sampled structures pairs, are displayed in Figure ~\ref{random},
showing that $6.7\%$ of the uniform random pairs exhibit $r(H_2(R))=1$.

\begin{figure}[htbp]
    \centering
    \includegraphics[width=0.75\textwidth]{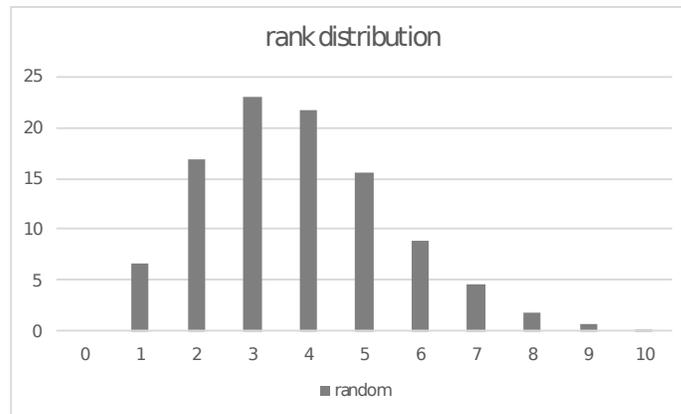}
    \caption{$r(H_2(R))$ for uniformly random structure pairs: $r(H_2(R))$ ($x$-axis) and
      the relative frequencies ($y$-axis).}
    \label{random}
\end{figure}

As for future work, the complexity analysis and optimal scheduling problems arising from the work
of Huang \cite{Huang} suggest to consider a graded version of the homologies, developed here.
Let $t\ge 1$ be an integer and $R=(S,T)$ be a bi-secondary structure.
We set $K_d^t(R)=\{Y\in K_d(R)| \omega(Y)\ge t\}$, the set of $d$-simplices of weight
at least $t$. We define $C_d^t(R)$ to be the free abelian group generated by $K_d^t(R)$, i.e.~the
chain group of rank $d$ and weight at least $t$. It is easy to see then that $C_d(R)=C_d^1(R)$ for
all $d\ge 0$ and that $C_d^{t+1}(R)\leq C_d^t(R)$ for all $t\ge 1$ and all $d\ge 0$.  We can naturally
define boundary operators for these groups in terms of restrictions of our original boundary maps as
$\partial_d^t:C_d^t(R)\xrightarrow[]{}C_{d-1}^t(R)$ with $\partial_d^t=\partial_d|_{C_d^t(R)}$.  As such,
we obtain a $t$-parametric sequence of nerves $\{K^t(R)\}_{t\ge 1}$ each of which gives rise to its
$t$-labelled homology sequence. Tracking the persistence of homology group generators across the
newly obtained homological $t$-spectrum gives rise to a more granular analysis of the structure of
the complete nerve~\cite{zomorodian2005computing}. This analysis represents a version of persistent
homology, pioneered by Edelsbrunner and by Gunnar Carlson \cite{edelsbrunner2002persistence,gunnar2009topology,gunnarzomo2005barcodes} and is of central importance
for designing an optimal loop-removal schedule in \cite{Huang}.

We extend the homology analysis to planar interaction structures \cite{andersen2012topology}.
Due to the fact that the physical $5'-3'$ distance for $RNA$ strands is in general very
small~\cite{yoffe2010ends}, the formation of an interaction structure is connected to the alignment of
two discs, each representing the respective, circular backbone. That is, interpreting the two circles
corresponding to two interacting secondary structures $S_1,S_2$ to be $\partial(D(0,1))$, the boundaries
of unit disks in $\mathbb{C}$. These boundaries contain distinguished points that correspond to the
paired vertices in the secondary structures. The connection between interaction structure and
disc-alignment leads one to consider one disc being acted upon by M{\"o}bius transforms. This action
is well defined since the M{\"o}bius maps of the disc map the boundary to itself and, being holomorphic,
cannot introduce crossings. Different alignments are then captured by these automorphisms and give
rise to a spectrum of homologies, as introduced in this paper.

\section{Declarations of interest}
None.

\section{Acknowledgments}
We gratefully acknowledge the comments from Fenix Huang. Many thanks to Thomas Li, Ricky Chen and Reza Rezazadegan for discussions.

\section*{References}


\end{document}